\numberwithin{equation}{section}
\theoremstyle{definition}
\newtheorem{theorem}{Theorem}[section]
\newtheorem{conjecture}[theorem]{Conjecture}
\newtheorem{corollary}[theorem]{Corollary} 
\newtheorem{definition}[theorem]{Definition} 
\newtheorem{example}[theorem]{Example}
\newtheorem{lemma}[theorem]{Lemma}
\newtheorem{proposition}[theorem]{Proposition}
\newtheorem{question}[theorem]{Question}
\DeclareMathOperator\gr{gr}
\DeclareMathOperator\GL{GL}
\DeclareMathOperator\PAut{PAut}
\renewcommand\int{\mathrm{int}}
\newcommand\inv{^{-1}}
\newcommand\niso{\ncong}
\newcommand\iso{\cong}
\newcommand\tensor{\otimes}
\newcommand\kk{\Bbbk}
\newcommand\cL{\mathcal L}
\newcommand\cR{\mathcal R}
\newcommand\pw{\mathcal W}
\newcommand\NN{\mathbb N}
\newcommand\ZZ{\mathbb Z}
\newcommand\bc{\mathbf c}
\newcommand\bq{\mathbf q}
\newcommand\fg{\mathfrak g}
\newcommand\PC{P_\bc}
\newcommand\grp[1]{\langle #1 \rangle}
\begin{document}

\title{Hopf actions on Poisson algebras}

\author[Alqahtani]{Awn Alqahtani}
\address{Najran University, Department of Mathematics, King Abdulaziz Road, Najran 55461, Kingdom of Saudi Arabia}
\email{odalqahtani@nu.edu.sa}

\author[Gaddis]{Jason Gaddis}
\address{Miami University, Department of Mathematics, Oxford, Ohio 45056, USA} 
\email{gaddisj@miamioh.edu}

\author[Wang]{Xingting Wang}
\address{Department of Mathematics, Louisiana State University, Baton Rouge, Louisiana 70803, USA}
\email{xingtingwang@lsu.edu}

\subjclass{
17B63, 
16T05, 
}
\keywords{Poisson algebra, Hopf algebra, Universal Enveloping Algebra, Taft algebra}
\begin{abstract}
We study finite-dimensional Hopf actions on Poisson algebras and explore the phenomenon of quantum rigidity in this context. Our main focus is on filtered (and especially quadratic) Poisson algebras, including the Weyl Poisson algebra in $2n$ variables and certain Poisson algebras in two variables. In particular, we show that any finite-dimensional Hopf algebra acting inner faithfully on these Poisson algebras must necessarily factor through a group algebra—mirroring well-known rigidity theorems for Weyl algebras in the associative setting. The proofs hinge on lifting the Hopf actions to associated Rees algebras, where we construct suitable noncommutative “quantizations” that allow us to leverage classification results for Hopf actions on quantum (or filtered) algebras. We also discuss how group actions on Poisson algebras extend to universal enveloping algebras, and we give partial classifications of Taft algebra actions on certain low-dimensional Poisson algebras.
\end{abstract}

\maketitle

\section{Introduction}
Quantum rigidity for Poisson algebras states that the automorphism groups of Poisson algebras with nontrivial brackets are, in a sense, smaller than those with trivial brackets. A similar story plays out for Artin--Schelter regular algebras, which are connected to quadratic Poisson algebras through deformation-quantization. See, for example, the work of Ma \cite{cma1}, which computes the automorphism groups for unimodular polynomial Poisson algebras in three variables. This paper contributes to the search for further symmetries through Hopf algebra actions on Poisson algebras (so-called \emph{quantum symmetries}).

However, there is an immediate obstruction to this study. A Poisson algebra is herein defined as a commutative algebra equipped with a Lie bracket, which is a biderivation. By a result of Etingof and Walton, \cite{EW0}, any action of a semisimple Hopf algebra on a commutative domain factors through a group algebra. This suggests that quantum symmetries for Poisson domains only arise from nonsemisimple Hopf actions. On the other hand, the work of Allman classifies actions by a Taft algebra on polynomial rings \cite{All}. Taft algebras are an important class of non-semisimple, pointed Hopf algebras. Their actions on various families of Artin--Schelter regular algebras have also been considered \cite{CG,CGW,GWY}. Our work will focus on actions of \emph{non-semisimple} Hopf algebras on Poisson polynomial algebras.

We are also interested in how rigidity results in the associative setting extends to the Poisson case. For example, Cuadra, Etingof, and Walton have shown that the Weyl algebras over a field of characteristic zero are rigid in the sense that any action of a finite-dimensional Hopf action factors through a group algebra \cite{CEW2} (see also \cite{CWWZ2014,CEW1}). We consider a similar question for \emph{filtered} actions on the Weyl Poisson algebra.

In the study of quantum symmetries, the concept of rigidity can assume several meanings. 
An algebra may be called \emph{rigid} if its invariant ring (by a group or Hopf action) is not isomorphic to the ambient ring. In some settings, such as Artin--Schelter regular algebras, this may be generalized to asking whether the invariant ring maintains certain structural properties. The Weyl algebra is rigid with respect to the action of any finite group \cite{AP}, and the Weyl Poisson algebra is also rigid \cite[Theorem 1.1]{TIK2}. The work in \cite{cma1} and \cite{GVW} shows that various families of unimodular Poisson algebras are rigid in this sense.

In Section \ref{sec.back} we give precise definitions for the main objects of study, including Poisson algebras, gradings and filtrations, and unimodularity. We also introduce Hopf actions and coactions on Poisson algebras. In Section \ref{sec.extension}, we give conditions for an action of a Poisson algebra to extend to the its universal enveloping algebra (Theorem \ref{thm.enveloping}), the Rees algebra of a filtered Poisson algebra (Theorem \ref{thm.rees}), and quantizations of Poisson polynomial algebras (Theorem \ref{thm.quant}). Section \ref{sec.taft} classifies Taft algebra actions on Poisson polynomial algebras (Theorem \ref{thm.taft}).

\subsection*{Acknowledgments}

\section{Background}
\label{sec.back}

Throughout, $\kk$ is an algebraically closed field of characteristic zero, and all algebras are $\kk$-algebras.
A Poisson algebra $A$ is a commutative algebra along with a bracket $\{,\}:A \times A \to A$ such that $(A,\{,\})$ is a Lie algebra and $\{a,-\}:A \to A$ is a derivation for each $a \in A$.

\subsection{Graded Poisson algebras}

Let $\Gamma$ be an abelian monoid. The Poisson algebra $A$ is \emph{$\Gamma$-graded} if there is a vector space decomposition $\bigoplus_{\gamma \in \Gamma} A_{\gamma}$ such that $A_{\gamma_1} \cdot A_{\gamma_2} \subset A_{\gamma_1+\gamma_2}$ and $\{ A_{\gamma_1}, A_{\gamma_2} \} \subset A_{\gamma_1+\gamma_2}$ for all $\gamma_1,\gamma_2 \in \Gamma$. Dropping the Poisson condition gives the usual definition of a $\Gamma$-graded algebra.

\begin{example}\label{ex.quadratic}
Let $P=\kk[u_1,\hdots,u_n]$ be a polynomial Poisson algebra. There is a natural $\NN$-grading on the algebra $P$ obtained by setting $\deg(u_i)=1$. Here $P_k$ denotes the vector space of polynomials of degree $k$ (along with zero). The Poisson algebra $P$ is said to be \emph{quadratic} if it has the usual polynomial grading above and $\{u_i,u_j\} \in P_2$ for all $i,j$.
\end{example}

\subsection{Filtered Poisson algebras}

An \emph{$\NN$-filtration} $F$ on a Poisson algebra $A$ is a collection of subspaces $\{F_i A\}_{i \in \NN}$ satisfying $F_i A \subseteq F_{i+1} A$, $\bigcup_{i \geq 0} F_i A = A$, $(F_i A) \cdot (F_j A) \subseteq F_{i+j}A$, and $\{ F_i A, F_j A\} \subseteq F_{i+j}A$ for all $i,j \in \NN$. Dropping the Poisson condition gives the usual definition of an $\NN$-filtered algebra. Technically, we have defined an \emph{ascending} filtration, and a \emph{descending} filtration may be defined analogously.

\begin{example}
Let $P=\kk[u_1,\hdots,u_n]$ be a Poisson polynomial algebra. Again, set $\deg(u_i)=1$ for all $i$ and let $F_k P$ denote the vector space of polynomials of degree at most $k$. This defines a filtration on the algebra $P$. The Poisson algebra $P$ is \emph{filtered quadratic} if $\{u_i,u_j\} \in F_2 P$ for all $i,j$.  
\end{example}

Given an algebra $A$ with a filtration $F$, the \emph{associated graded ring} is defined as
\[ \gr_F(A) = \bigoplus_{m \geq 0} (F_m A)/(F_{m-1} A)\]
with $F_{-1}=0$. The \emph{Rees algebra} is defined as
\[ R_F(A) = \bigoplus_{n \geq 0} (F_n A)t^n \subseteq A[t].\]
Note that $R_F(A)/(t-1) \iso A$ and $R_F(A)/(t) \iso \gr_F(A)$. We drop the subscript $F$ when it is clear from context.

If $A$ is a Poisson algebra and $F$ is a Poisson filtration, then $\gr(A)$ and $R(A)$ are Poisson algebras with induced brackets. In particular, the bracket on $R(A)$ is induced from the natural bracket on $A[t]$:
\[ \{ at^n,bt^m\}_{A[t]} = \{a,b\}_A t^{n+m}.\]
That $\{ F_n A, F_m A \} \subseteq F_{n+m}A$ ensures that $R(A)$ is closed under the bracket.

\begin{example}\label{ex.pweyl}
Let $\pw = \kk[u_1,v_1,\hdots,u_n,v_n]$ be the $n$th Weyl Poisson algebra with bracket
\[ \{u_i,u_j\} = \{v_i,v_j\} = 0, \qquad \{u_i,v_j\} = \delta_{ij},\]
where $\delta_{ij}$ is the Kronecker-delta.

Note that $\pw$ is filtered quadratic. There is a standard filtration $F$ on $\pw$ where we set $\deg(u_i)=\deg(v_i)=1$ and let $F_n \pw$ be the span of polynomials of degree at most $n$. Under this filtration, $\gr(\pw) = \pw$ with trivial bracket. On the other hand, $R(\pw)=\pw[t]$ with bracket
\[ \{u_i,u_j\} = \{v_i,v_j\} = \{ u_i,t\} = \{v_i,t\} = 0, \qquad \{u_i,v_j\} = \delta_{ij} t^2.\]
\end{example}

\subsection{Unimodular Poisson algebras}

The \emph{modular derivation} $\phi$ of a polynomial Poisson algebra $P=\kk[u_1,\hdots,u_n]$ is given by
\[ \phi(f) = \sum_{j=1}^n \frac{\partial}{\partial u_j} \{f,u_j\}\]
for all $f \in P$. It is easy to check that $\phi$ is a Poisson derivation of $P$, that is, both an algebra derivation and a Lie algebra derivation. We say $P$ is \emph{unimodular} if $\phi=0$. 

\begin{example}
It is well-known that the Weyl Poisson algebra $\pw$ in Example \ref{ex.pweyl} is unimodular. It is straightforward to verify that $R(\pw)$ is again unimodular. Let $\phi$ be the modular derivation. For any $i$,
\begin{align*}
    \phi(u_i) = \sum_{j=1}^n \left( \frac{\partial}{\partial u_j}\{u_i,u_j\} + \frac{\partial}{\partial v_j}\{u_i,v_j\} \right) + \frac{\partial}{\partial t}\{u_i,t\}
        = \frac{\partial}{\partial v_i}\{u_i,v_i\}
        = \frac{\partial}{\partial v_i} t^2 = 0.
\end{align*}
Similarly, $\phi(v_i)=0$ for all $i$. Moreover, it is clear that $\phi(t)=0$.
\end{example}

\subsection{Hopf actions on Poisson algebras}
Throughout, we use (sumless) Sweedler notation for the comultiplication of a Hopf algebra. 

\begin{definition}
Let $H$ be a Hopf algebra. An algebra $A$ is a \emph{(left) $H$-module algebra} if $h \tensor a \to h \cdot a$ makes $A$ a (left) $H$-module, that is $h \cdot 1_A = \epsilon(h) 1_A$ and $h \cdot (ab) = (h_1 \cdot a)(h_2 \cdot a)$ for all $h \in H$ and all $a,b \in A$.

A Poisson algebra $A$ is a \emph{(left) $H$-module Poisson algebra} if $A$ is a (left) $H$-module algebra and $h \cdot \{a,b\} = \{ h_1 \cdot a, h_2 \cdot b \}$ for all $h \in H$ and all $a,b \in A$.
\end{definition}

\begin{example}
(1) Let $G$ be a subgroup of Poisson automorphisms of a Poisson algebra $A$. Then for $g \in G$, $g \cdot \{a,b\} = \{ g \cdot a, g \cdot b\}$. This agrees with the definition of a group action on a Poisson algebra \cite{AF} and so $A$ is a $\kk G$-module Poisson algebra.

(2) Let $\fg$ be the Lie algebra of Poisson derivations of a Poisson algebra $A$. For $\delta \in \fg$,
\[ \delta(\{a,b\}) = \{ \delta(a), b \} + \{ a, \delta(b) \},\]
and so $A$ is a $U(\fg)$-module Poisson algebra.
\end{example}

Suppose $H$ is a Hopf algebra and $A$ is an $H$-module algebra.
If $I$ is a Hopf ideal of $H$ such that $IA=0$, then $H/I$ acts naturally on $A$.
If $IA=0$ implies $I=0$ for all Hopf ideals $I$ of $H$, then we say $H$
acts \emph{inner faithfully} on $A$.  That is, the action of $H$ on $A$ does
not factor through the action of a ``smaller" Hopf algebra.

If $H$ is a cosemisimple Hopf algebra that acts inner faithfully on a commutative domain $A$, then $H$ is a finite group algebra \cite[Theorem 1.3]{EW0}. Note that cosemisimple and semisimple are equivalent over a field of characteristic zero, as we have assumed. Hence, we concentrate on the case where $H$ is non-semisimple.

Suppose $H$ acts on $A$. Define the fixed ring
\[ A^H = \{ a \in A : h \cdot a = \epsilon(h)a \text{ for all } h \in H\}.\]

\begin{lemma}
\label{lem.fixed}
If $A$ is a Poisson algebra and $H$ acts on $A$, then $A^H$ is a Poisson subalgebra of $A$.
\end{lemma}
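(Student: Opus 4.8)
The plan is to show that $A^H$ is closed under both the commutative multiplication and the Poisson bracket; closure under multiplication and the presence of $1_A$ is the standard fact that $A^H$ is a subalgebra of the $H$-module algebra $A$, so the real content is closure under $\{-,-\}$. First I would take $a,b \in A^H$, so that $h \cdot a = \epsilon(h)a$ and $h \cdot b = \epsilon(h)b$ for all $h \in H$, and compute $h \cdot \{a,b\}$ using the module Poisson algebra axiom $h \cdot \{a,b\} = \{h_1 \cdot a, h_2 \cdot b\}$.

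The key step is then the substitution $\{h_1 \cdot a, h_2 \cdot b\} = \{\epsilon(h_1) a, \epsilon(h_2) b\} = \epsilon(h_1)\epsilon(h_2)\{a,b\}$, followed by the counit identity $\epsilon(h_1)\epsilon(h_2) = \epsilon(h_1 \epsilon(h_2)) = \epsilon(h)$ (equivalently, applying $\epsilon$ to $\sum h_1 \epsilon(h_2) = h$). Bilinearity of the bracket lets us pull the scalars out. This yields $h \cdot \{a,b\} = \epsilon(h)\{a,b\}$, so $\{a,b\} \in A^H$.

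Since $A^H$ is already known (and routine) to be a subalgebra of $A$ as a commutative algebra, and it inherits the Jacobi identity and anti-symmetry of the bracket from $A$, and the biderivation property of $\{a,-\}$ on $A^H$ is inherited from that on $A$, the above computation completes the proof that $A^H$ is a Poisson subalgebra. I do not anticipate any real obstacle here: the argument is a direct unwinding of the definitions, and the only mild subtlety is being careful with Sweedler notation when applying the counit — specifically remembering that $\epsilon$ is an algebra map so that $\epsilon(h_1)\epsilon(h_2)$ collapses to $\epsilon(h)$ rather than needing any further manipulation.
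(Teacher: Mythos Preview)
Your proposal is correct and follows essentially the same approach as the paper: note that $A^H$ is a subalgebra by the standard argument, then verify closure under the bracket via $h\cdot\{a,b\}=\{h_1\cdot a,h_2\cdot b\}=\epsilon(h_1)\epsilon(h_2)\{a,b\}=\epsilon(h)\{a,b\}$. The only difference is that you spell out the (automatic) inheritance of the Jacobi, anti-symmetry, and Leibniz identities, which the paper leaves implicit.
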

\begin{proof}
It is well-known that $A^H$ is a subalgebra of $A$.
We need only show that $A^H$ is closed under the bracket.
Let $a,b \in A^H$ and $h \in H$. Then
\[ h \cdot \{a,b\} = \{ h_1 \cdot a, h_2 \cdot b \}
	= \{ \epsilon(h_1) a, \epsilon(h_2) b\}
	= \epsilon(h_1)\epsilon(h_2) \{a,b\}
	= \epsilon(h) \{a,b\}. \qedhere\]
\end{proof}

\subsection{Comodule Poisson algebras}

We now define the dual notion to Hopf module Poisson algebras above. Given Poisson algebras $(A,\{,\})_A$ and $(B,\{,\})_B$, there is a natural Poisson structure on $A \tensor B$ given by
\[ \{ a_1 \tensor b_1, a_2 \tensor b_2 \} = \{ a_1,a_2\}_A \tensor b_1b_2 + a_1a_2 \tensor \{b_1,b_2\}_B.\]
If $H$ is a Hopf algebra, then we make $A \tensor H$ into a Poisson algebra by assigning 
\[ \{g,h\} = [g,h]=gh-hg \]
for all $g,h \in H$. Similarly for $H \tensor A$.

\begin{definition}
Let $H$ be a Hopf algebra.
An algebra $A$ is a \emph{right $H$-comodule algebra} with structure map $\rho:A \to A \tensor H$ that makes $A$ into a (right) $H$-comodule, $\rho(1_A) = 1_A \tensor 1_H$, and
\[ \rho(ab) = \sum (a_0 b_0) \tensor (a_1 b_1) \]
for all $a,b \in A$ where we write $\rho(a)=a_0 \tensor a_1$ and $\rho(b)=b_0 \tensor b_1$.

A Poisson algebra $A$ is a \emph{right $H$-comodule Poisson algebra} with structure map $\rho:A \to A \tensor H$ that makes $A$ a (right) $H$-comodule algebra and
\[ \rho(\{a,b\}) = \sum \{ a_0, b_0 \} \tensor a_1b_1 \]
for all $a,b \in A$.

One defines a left $H$-comodule (Poisson) algebra similarly.
\end{definition}

\begin{example}\label{ex.coaction}
Let $c=c_{12} \in \kk$ and let $\PC=\kk[u_1,u_2]$, so $\{u_1,u_2\}=cu_1u_2$. Let $H=\kk[t^{\pm 1}]$, which is a Hopf algebra with structure maps $\Delta(t^n) = t^n \tensor t^n$, $\epsilon(t^n) = 1$, and $S(t^n) = t^{-n}$ for all $n \in \ZZ$. Define a right coaction $\rho:\PC \to \PC \tensor H$ by $\rho(u_1) = u_1 \tensor t$ and $\rho(u_2) = u_2 \tensor t\inv$. It is easy to verify that $\rho$ is a Poisson algebra homomorphism:
\begin{align*}
\rho(u_1)\rho(u_2)-\rho(u_2)\rho(u_1) 
    &= (u_1 \tensor t)(u_2 \tensor t\inv) - (u_2\tensor t\inv)(u_1\tensor t)
    = 0 
    = \rho(u_1u_2-u_2u_1) \\
\{ \rho(u_1),\rho(u_2)\} 
    &= \{ u_1 \tensor t, u_2 \tensor t\inv\} 
    = c u_1u_2 \tensor 1 
    = c(u_1 \tensor t)(u_2 \tensor t\inv) 
    = \rho(\{u_1,u_2\}).
\end{align*}
Thus, $\PC$ is a right $H$-comodule Poisson algebra.
\end{example}

The following proposition is well-known in the associative algebra setting, see e.g., \cite[Proposition 6.2.4]{DNR}.

\begin{proposition}
Let $H$ be a Hopf algebra and $A$ a Poisson algebra.
\begin{enumerate}
\item If $A$ is a right $H$-comodule Poisson algebra, then $A$ is a left $H^*$-module Poisson algebra.
\item If $A$ is a left $H^*$-module Poisson algebra and $H$ is finite dimensional, then $A$ is a right $H$-comodule Poisson algebra.
\end{enumerate}
\end{proposition}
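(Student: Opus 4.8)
The plan is to lean on the well-known associative correspondence between right $H$-comodule algebras and left $H^{*}$-module algebras (e.g.\ \cite[Proposition 6.2.4]{DNR}), which already supplies the underlying module/comodule \emph{algebra} structures in both directions, and then to check that the single extra axiom---compatibility with the Poisson bracket---is preserved by the passage each way. Everything reduces to the duality between the multiplication of $H$ and the comultiplication of $H^{*}$, namely $\langle \Delta_{H^{*}}(f), g\tensor h\rangle = \langle f, gh\rangle$.

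For (1), suppose $\rho:A\to A\tensor H$ makes $A$ a right $H$-comodule Poisson algebra, written $\rho(a)=a_0\tensor a_1$, and define the $H^{*}$-action by $f\cdot a = a_0\,\langle f,a_1\rangle$ (the standard module algebra structure). To verify the Poisson axiom, I would apply the comodule Poisson condition $\rho(\{a,b\})=\{a_0,b_0\}\tensor a_1b_1$ together with the definition of the action to get
\[ f\cdot\{a,b\} = \langle f, a_1b_1\rangle\,\{a_0,b_0\} = \langle f_1, a_1\rangle\,\langle f_2, b_1\rangle\,\{a_0,b_0\}, \]
where the second equality uses $\langle f, a_1b_1\rangle = \langle \Delta_{H^{*}}(f), a_1\tensor b_1\rangle = \langle f_1,a_1\rangle\langle f_2,b_1\rangle$. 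Since the bracket is bilinear, the right-hand side equals $\{\,a_0\langle f_1,a_1\rangle,\ b_0\langle f_2,b_1\rangle\,\} = \{f_1\cdot a,\ f_2\cdot b\}$ (with the comodule and coproduct sums implicit), which is exactly the required identity $f\cdot\{a,b\}=\{f_1\cdot a,\,f_2\cdot b\}$.

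For (2), assume $H$ is finite dimensional and $A$ is a left $H^{*}$-module Poisson algebra. Fix a basis $\{h_i\}$ of $H$ with dual basis $\{h_i^{*}\}$ of $H^{*}$, and set $\rho(a)=\sum_i (h_i^{*}\cdot a)\tensor h_i\in A\tensor H$, the standard (basis-independent) comodule algebra structure. To check the comodule Poisson condition I would use the module Poisson axiom to write
\[ \rho(\{a,b\}) = \sum_k (h_k^{*}\cdot\{a,b\})\tensor h_k = \sum_k \{\,(h_k^{*})_1\cdot a,\ (h_k^{*})_2\cdot b\,\}\tensor h_k, \]
and then invoke the identity $\sum_k (h_k^{*})_1\tensor (h_k^{*})_2\tensor h_k = \sum_{i,j} h_i^{*}\tensor h_j^{*}\tensor h_ih_j$ in $H^{*}\tensor H^{*}\tensor H$, which holds because $\Delta_{H^{*}}(h_k^{*})=\sum_{i,j}\langle h_k^{*},h_ih_j\rangle\,h_i^{*}\tensor h_j^{*}$ and $\sum_k\langle h_k^{*},x\rangle h_k=x$ for all $x\in H$. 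Substituting transforms the right-hand side into $\sum_{i,j}\{\,h_i^{*}\cdot a,\ h_j^{*}\cdot b\,\}\tensor h_ih_j = \sum \{a_0,b_0\}\tensor a_1b_1$, as desired.

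I do not expect a serious obstacle: coassociativity, the counit condition, and multiplicativity of $\rho$ (resp.\ of the $H^{*}$-action) are precisely the associative statements already cited, so the only genuinely new content is the bracket computation, which in each direction is a bookkeeping exercise in Sweedler notation plus the single pairing identity above. The one point needing mild care is, in (2), confirming that $\rho$ is independent of the chosen basis and really lands in $A\tensor H$ rather than $A\tensor H^{**}$---both guaranteed by $\dim_{\kk} H<\infty$---so that the displayed manipulation is legitimate.
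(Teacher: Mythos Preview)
Your proposal is correct and follows essentially the same approach as the paper: both rely on the known associative comodule/module correspondence and then verify only the Poisson bracket axiom via the duality $\langle f,gh\rangle=\langle f_1,g\rangle\langle f_2,h\rangle$. The sole cosmetic difference is in part~(2): the paper tests the equality $\rho(\{a,b\})=\{\rho(a),\rho(b)\}$ by applying $I\tensor f$ for arbitrary $f\in H^{*}$ and then appealing implicitly to the fact that $H^{*}$ separates points of $H$, whereas you manipulate the sum directly using the dual-basis identity $\sum_k (h_k^{*})_1\tensor (h_k^{*})_2\tensor h_k=\sum_{i,j} h_i^{*}\tensor h_j^{*}\tensor h_ih_j$; these are two sides of the same coin.
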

\begin{proof}
(1) Let $\rho:A \to A \tensor H$ be the structure map 
using conventions as before. For $f \in H^*$ define $f \cdot a = \sum f(a_1)a_0$. 
It is well-known that this makes $A$ into a left $H$-module algebra. It remains only to verify that it respects the Poisson bracket on $A$.  That is,
\[ 
f \cdot \{a,b\} 
	= \sum f(a_1b_1) \{ a_0, b_0 \}
	= \sum f_1(a_1) f_2(b_1) \{ a_0, b_0 \}
	= \sum \{ f_1(a_1) a_0, f_2(b_1)  b_0 \}
	= \sum \{ f_1 \cdot a, f_2 \cdot b \}.
\]

(2) Set $n=\dim_\kk(H)$ and choose dual bases $\{e_1,\hdots,e_n\} \subset H$ and $\{e_1^*,\hdots,e_n^*\} \subset H^*$. The map $\rho: A \to A \tensor H$ given by
$\rho(a) = \sum_{i=1}^n e_i^* \cdot a \tensor e_i$ makes $A$ into a right $H$-comodule algebra. 
Then for $f \in H^*$,
\begin{align*}
(I \tensor f)(\rho(\{a,b\})
	&= \sum_{i=1}^n e_i^* \cdot \{a,b\} \tensor f(e_i)
	= \sum_{i=1}^n \left( e_i^* f(e_i) \right) \cdot \{a,b\} \tensor 1
	= f \cdot \{a,b\} \tensor 1 \\
	&= \sum \{ f_1 \cdot a, f_2 \cdot b \} \tensor 1
	= \sum_{i,j = 1}^n \{ \left( e_i^* f_1(e_i) \right) \cdot a, \left( e_j^* f_2(e_j) \right) \cdot b \} \tensor 1 \\
	&= \sum_{i,j = 1}^n \{ e_i^* \cdot a, e_j^* \cdot b \} \tensor f_1(e_i)f_2(e_j)
	= \sum_{i,j = 1}^n \{ e_i^* \cdot a, e_j^* \cdot b \} \tensor f(e_ie_j) \\
	&= (I \tensor f)\left( \sum_{i,j = 1}^n \{ e_i^* \cdot a, e_j^* \cdot b \} \tensor e_ie_j \right)
	= (I \tensor f)( \{ \rho(a), \rho(b) \} ).
\end{align*}
That is, $\rho$ is a Poisson homomorphism.
\end{proof}

\section{Extending actions}
\label{sec.extension}

In this section, we consider how actions on Poisson algebras extend to universal enveloping algebras, Rees algebras, and quantizations. Part of the motivation for studying these is to demonstrate that the Weyl Poisson algebra is rigid with respect to certain Hopf algebra actions.

\subsection{Universal enveloping algebras}

Let $A$ be a Poisson algebra. The universal enveloping algebra $U(A)$ of $A$ is the associative algebra that is universal with respect to the existence of an algebra embedding $\mu:A \to U(A)$ and a Lie algebra homomorphism $\nu:(A,\{,\}) \to (U(A),[,])$ \cite{OH3}. In the literature, the maps $\mu$ and $\nu$ are often referred to as $m$ and $h$, respectively. However, we wish to avoid conflict with the usage of $h$ as a generic element of a Hopf algebra $H$.

In terms of generators and relations, $U(A)$ is the associative algebra over $\kk$ with identity $1$ generated by $\mu_a = \mu(a)$ and $\nu_a = \nu(a)$ for all $a \in A$. The relations on these generators are:
\begin{align*}
\mu_{ab}&= \mu_a\mu_b, &
\mu_{\{a,b\}}&=\nu_a\mu_b-\mu_b\nu_a, & 
\mu_1 &= 1, \\
\nu_{ab}&=\mu_b\nu_a + \mu_a\nu_b, &
\nu_{\{a,b\}}&=\nu_a\nu_b-\nu_b\nu_a.
\end{align*}
for any $a,b\in A$ \cite{UU}.

\begin{example}\label{ex.skewsym}
Let $\bc=(c_{ij})\in M_n(\kk)$ be a skew-symmetric matrix and $\PC=\kk[u_1,\ldots,u_n]$ be the quadratic Poisson algebra with Poisson bracket given by 
\[ \{u_i,u_j\} = c_{ij}u_iu_j \]
for all $1\leq i,j\leq n$. Then $U(\PC)$ is generated over $\kk$ by $\mu_i=\mu_{u_i}$ and $\nu_i = \nu_{u_i}$ for $1\leq i\leq n$ with relations
\[
[\mu_i,\mu_j] = [\mu_i,\nu_i] = 0, \qquad
[\nu_i,\mu_j] = c_{ij} \mu_i \mu_j, \qquad
[\nu_i,\nu_j] = c_{ij} (\mu_i\nu_j + \mu_j\nu_i),
\]
for all $1\leq i,j\leq n$, $i \neq j$.
\end{example}

Let $G$ be a group acting on the Poisson algebra $A$. In \cite{GVW} it was shown that this action extends to an action on $U(A)$ via $g \cdot \mu_a = \mu_{g \cdot a}$ and $g \cdot \nu_a = \nu_{g \cdot a}$ for all $g \in G$, $a \in A$. See also the work of Ma related to invariants of such actions \cite{cma1}. We aim to generalize this to certain Hopf algebra actions.

\begin{theorem}\label{thm.enveloping}
Let $H$ be a Hopf algebra and $A$ an $H$-module Poisson algebra.
Then $U(A)$ is an $H$-module algebra via the action
\begin{align}\label{eq.hopf_ext}
	h \cdot \mu_a = \mu_{h \cdot a}, \qquad 
    h \cdot \nu_a = \nu_{h \cdot a},
\end{align}
for all $h \in H$ and $a \in A$ if and only if 
\begin{align}
\label{eq.hopf_ext_cond1}
\mu_{h_1 \cdot b}\nu_{h_2 \cdot a} &= \mu_{h_2 \cdot b} \nu_{h_1 \cdot a}, \\
\label{eq.hopf_ext_cond2}
\nu_{h_2 \cdot b}\nu_{h_1 \cdot a} &= \nu_{h_1 \cdot b}\nu_{ h_2 \cdot a},
\end{align}
for all $h \in H$ and all $a,b \in A$.
\end{theorem}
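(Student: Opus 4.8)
The plan is to verify that the formula \eqref{eq.hopf_ext} is well-defined and respects the defining relations of $U(A)$; by the universal property of $U(A)$, this will give an algebra homomorphism $H \to \End_\kk(U(A))$, and one then checks separately that it is an $H$-module-algebra structure (i.e. compatibility with the Hopf structure on $H$). Concretely, $U(A)$ is presented by generators $\mu_a$, $\nu_a$ (subject to the given five families of relations), so defining $h$ to act on a word in the generators by the Leibniz-type rule forced by $h\cdot(xy) = (h_1\cdot x)(h_2\cdot y)$ is automatic once we know $h$ sends each defining relation to a consequence of the defining relations. So the core of the proof is the relation-by-relation check, and the statement is precisely identifying which relations force the extra conditions \eqref{eq.hopf_ext_cond1}--\eqref{eq.hopf_ext_cond2}.

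First I would treat the three relations that involve only the commutative/algebra part. The relation $\mu_1 = 1$ is handled by $h\cdot \mu_1 = \mu_{h\cdot 1_A} = \mu_{\epsilon(h)1_A} = \epsilon(h)1$, using that $A$ is an $H$-module algebra. For $\mu_{ab} = \mu_a\mu_b$: apply $h$, use \eqref{eq.hopf_ext}, and compute $h\cdot(\mu_a\mu_b) = (h_1\cdot\mu_a)(h_2\cdot\mu_b) = \mu_{h_1\cdot a}\mu_{h_2\cdot b} = \mu_{(h_1\cdot a)(h_2\cdot b)} = \mu_{h\cdot(ab)}$, where the last equality again uses the module-algebra axiom for $A$. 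For $\nu_{ab} = \mu_b\nu_a + \mu_a\nu_b$: the left side becomes $\nu_{h\cdot(ab)} = \nu_{(h_1\cdot a)(h_2\cdot b)}$, and expanding via the $\nu$-Leibniz relation in $U(A)$ gives $\mu_{h_2\cdot b}\nu_{h_1\cdot a} + \mu_{h_1\cdot a}\nu_{h_2\cdot b}$; the right side $h\cdot(\mu_b\nu_a + \mu_a\nu_b)$ expands (via coassociativity, writing $\Delta h = h_1\otimes h_2$ and relabeling) to $\mu_{h_1\cdot b}\nu_{h_2\cdot a} + \mu_{h_1\cdot a}\nu_{h_2\cdot b}$. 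Matching these two expressions is exactly condition \eqref{eq.hopf_ext_cond1} (after swapping the roles of $a,b$ in one summand and using cocommutativity-free manipulation of the Sweedler indices). A symmetric computation on $\nu_{\{a,b\}} = \nu_a\nu_b - \nu_b\nu_a$ — using that $A$ is an $H$-module \emph{Poisson} algebra so $h\cdot\{a,b\} = \{h_1\cdot a, h_2\cdot b\}$, hence $\nu_{h\cdot\{a,b\}} = \nu_{\{h_1\cdot a, h_2\cdot b\}} = \nu_{h_1\cdot a}\nu_{h_2\cdot b} - \nu_{h_2\cdot a}\nu_{h_1\cdot b}$ — compared against $h\cdot(\nu_a\nu_b - \nu_b\nu_a) = \nu_{h_1\cdot a}\nu_{h_2\cdot b} - \nu_{h_1\cdot b}\nu_{h_2\cdot a}$ yields precisely condition \eqref{eq.hopf_ext_cond2}. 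I would also check the mixed relation $\mu_{\{a,b\}} = \nu_a\mu_b - \mu_b\nu_a$: the left side is $\mu_{h\cdot\{a,b\}} = \mu_{\{h_1\cdot a,h_2\cdot b\}} = \nu_{h_1\cdot a}\mu_{h_2\cdot b} - \mu_{h_2\cdot b}\nu_{h_1\cdot a}$, and I expect this to match $h\cdot(\nu_a\mu_b - \mu_b\nu_a)$ after invoking \eqref{eq.hopf_ext_cond1} again, so no new condition appears here.

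For the converse direction, note that $U(A)$ maps onto many quotients, and one can specialize: if $U(A)$ is an $H$-module algebra via \eqref{eq.hopf_ext}, then applying $h$ to the relation $\nu_{ab} = \mu_b\nu_a + \mu_a\nu_b$ and to $\nu_{\{a,b\}} = [\nu_a,\nu_b]$ and reading off the resulting identities in $U(A)$ forces \eqref{eq.hopf_ext_cond1} and \eqref{eq.hopf_ext_cond2} respectively, since the images $\mu_{h_i\cdot b}\nu_{h_j\cdot a}$ are genuinely distinct elements of $U(A)$ in general. The main obstacle, and the only genuinely delicate point, is the bookkeeping of Sweedler indices: the defining relations of $U(A)$ are not symmetric in their two arguments (e.g. $\nu_{ab}$ expands with $\mu_b\nu_a$, not $\mu_a\nu_b$ alone), so when one applies a non-cocommutative $h$ the two sides land with $h_1$ and $h_2$ attached to \emph{different} factors, and it is exactly this asymmetry that \eqref{eq.hopf_ext_cond1}--\eqref{eq.hopf_ext_cond2} repair. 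I would be careful to state the Sweedler manipulations explicitly (using coassociativity to split $\Delta$ and relabel) rather than gesture at them, since a sign or index error there is the easiest way to get a wrong condition. Everything else is a routine induction extending the action from generators to all of $U(A)$ via the module-algebra axiom, together with the standard check that $(gh)\cdot x = g\cdot(h\cdot x)$, which follows from functoriality of $\Delta$.
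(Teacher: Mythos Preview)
Your proposal is correct and follows essentially the same route as the paper: a relation-by-relation check that applying $h$ to each defining relation of $U(A)$ yields zero precisely when \eqref{eq.hopf_ext_cond1} and \eqref{eq.hopf_ext_cond2} hold, with $\mu_{ab}=\mu_a\mu_b$ and $\mu_1=1$ requiring nothing, the relations $\nu_{ab}=\mu_b\nu_a+\mu_a\nu_b$ and $\mu_{\{a,b\}}=[\nu_a,\mu_b]$ each yielding \eqref{eq.hopf_ext_cond1}, and $\nu_{\{a,b\}}=[\nu_a,\nu_b]$ yielding \eqref{eq.hopf_ext_cond2}. One small correction: in your expansion of $\nu_{\{h_1\cdot a,\,h_2\cdot b\}}$ you wrote $\nu_{h_2\cdot a}\nu_{h_1\cdot b}$ where you meant $\nu_{h_2\cdot b}\nu_{h_1\cdot a}$; and for the converse you do not need the elements to be ``genuinely distinct'' --- the point is simply that if the action is well-defined then $h$ applied to any relation gives $0$, and your computations show that value is exactly the difference of the two sides of \eqref{eq.hopf_ext_cond1} or \eqref{eq.hopf_ext_cond2}.
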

\begin{proof}
For $h \in H$, we extend the action of $h$ on $A$ by \eqref{eq.hopf_ext}.
We verify that $U(A)$ is indeed an $H$-module algebra by showing that $h$ respects the relations of $U(A)$.

Let $a,b \in A$. First we have
\[
h \cdot \mu_{ab}
	= \mu_{h \cdot (ab)} 
 = \mu_{(h_1 \cdot a)(h_2 \cdot b)} 
    =  \mu_{h_1 \cdot a} \mu_{h_2 \cdot b}
    = (h_1 \cdot \mu_a) (h_2 \cdot \mu_b)
    = h \cdot (\mu_a \mu_b).
\]
Next we have
\begin{align*}
h \cdot \left(\mu_{\{a,b\}} - ( \nu_a\mu_b-\mu_b\nu_a ) \right)
    &= \mu_{\{h_1 \cdot a, h_2 \cdot b\}} - (\nu_{h_1 \cdot a}\mu_{h_2 \cdot b} - \mu_{h_1 \cdot b}\nu_{h_2\cdot a}) \\
    &= (\nu_{h_1 \cdot a} \mu_{h_2 \cdot b} - \mu_{h_2 \cdot b} \nu_{h_1 \cdot a}) - (\nu_{h_1 \cdot a}\mu_{h_2 \cdot b} - \mu_{h_1 \cdot b}\nu_{h_2\cdot a}) \\
    &= \mu_{h_1 \cdot b}\nu_{h_2 \cdot a} - \mu_{h_2 \cdot b} \nu_{h_1 \cdot a}.
\end{align*}
Hence, if $H$ acts on $U(A)$, then \eqref{eq.hopf_ext_cond1} holds.

Similarly,
\begin{align*}
h \cdot \left(\nu_{ab} - (\mu_b \nu_a + \mu_a \nu_b)\right)
    &= \nu_{(h_1 \cdot a)(h_2 \cdot y)} - (\mu_{h_1 \cdot b} \nu_{h_2 \cdot a} + \mu_{h_1 \cdot a} \nu_{h_2 \cdot b}) \\
    &= (\mu_{h_2 \cdot b} \nu_{h_1 \cdot a} + \mu_{h_1 \cdot a} \nu_{h_2 \cdot b}) - (\mu_{h_1 \cdot b} \nu_{h_2 \cdot a} + \mu_{h_1 \cdot a} \nu_{h_2 \cdot b}) \\
    &= \mu_{h_2 \cdot b} \nu_{h_1 \cdot a}  - \mu_{h_1 \cdot b} \nu_{h_2 \cdot a},
\end{align*}
which again implies \eqref{eq.hopf_ext_cond1}.

Finally, we have
\begin{align*}
h \cdot \left(\nu_{\{a,b\}} - (\nu_a \nu_b - \nu_b \nu_a) \right) 
    &= \nu_{ \{h_1 \cdot a, h_2 \cdot b\} } - (\nu_{h_1 \cdot a} \nu_{h_2 \cdot b} - \nu_{h_1 \cdot b}\nu_{h_2 \cdot a}) \\
    &= (\nu_{h_1 \cdot a} \nu_{h_2 \cdot b} - \nu_{h_2 \cdot b}\nu_{h_1 \cdot a}) - (\nu_{h_1 \cdot a} \nu_{h_2 \cdot b} - \nu_{h_1 \cdot b}\nu_{h_2 \cdot a}) \\
    &= \nu_{h_1 \cdot b}\nu_{h_2 \cdot a} - \nu_{h_2 \cdot b}\nu_{h_1 \cdot a}.
\end{align*}
Hence, if $H$ acts on $U(A)$, then \eqref{eq.hopf_ext_cond2} holds.

Conversely, it is clear from the above computations that if the two conditions hold, then the action of $H$ on $U(A)$ is well-defined.
\end{proof}

\begin{corollary}\label{cor.cocomm}
Keep the setup of Theorem \ref{thm.enveloping},
so $A$ is a Poisson algebra.
If $H$ is cocommutative, then \eqref{eq.hopf_ext_cond1} and \eqref{eq.hopf_ext_cond2} hold. Hence, $U(A)$ is an $H$-module algebra.

In particular, let $\cL$ be a Lie algebra of Poisson derivations of $A$. Then $U(\cL)$ is an infinite-dimensional Hopf algebra which acts naturally on $A$. Thus, $U(\cL)$ acts on $U(A)$.
\end{corollary}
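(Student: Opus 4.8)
The plan is to verify the two conditions \eqref{eq.hopf_ext_cond1} and \eqref{eq.hopf_ext_cond2} directly, using cocommutativity of $H$. Recall that $H$ is cocommutative means $\Delta = \tau \circ \Delta$, where $\tau$ is the flip; in Sweedler notation this says $h_1 \tensor h_2 = h_2 \tensor h_1$ for all $h \in H$. First I would observe that both sides of each condition are bilinear in the ``expressions'' indexed by $h_1$ and $h_2$, and that swapping the roles of $h_1$ and $h_2$ in the right-hand side of \eqref{eq.hopf_ext_cond1} produces exactly the left-hand side: the left side is $\mu_{h_1 \cdot b}\nu_{h_2 \cdot a}$ and the right side is $\mu_{h_2 \cdot b}\nu_{h_1 \cdot a}$, which differ only by interchanging $h_1 \leftrightarrow h_2$. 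Since $\sum h_1 \tensor h_2 = \sum h_2 \tensor h_1$, applying the (bilinear) assignment $x \tensor y \mapsto \mu_{x \cdot b}\nu_{y \cdot a}$ to both sides of this identity yields \eqref{eq.hopf_ext_cond1}. The same argument handles \eqref{eq.hopf_ext_cond2}: the assignment $x \tensor y \mapsto \nu_{x \cdot b}\nu_{y \cdot a}$ applied to $\sum h_1 \tensor h_2 = \sum h_2 \tensor h_1$ gives $\sum \nu_{h_1 \cdot b}\nu_{h_2 \cdot a} = \sum \nu_{h_2 \cdot b}\nu_{h_1 \cdot a}$, which is exactly \eqref{eq.hopf_ext_cond2}. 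Having verified both conditions, Theorem \ref{thm.enveloping} immediately gives that $U(A)$ is an $H$-module algebra.

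For the ``in particular'' statement, I would proceed as follows. Given a Lie algebra $\cL$ of Poisson derivations of $A$, the universal enveloping algebra $U(\cL)$ is a cocommutative Hopf algebra (this is standard: primitives are grouplike-primitive, $\Delta(x) = x \tensor 1 + 1 \tensor x$ for $x \in \cL$, and cocommutativity on generators propagates to all of $U(\cL)$ since the comultiplication is an algebra map and $\tau \circ \Delta$ is too). It is infinite-dimensional as soon as $\cL \neq 0$. The action of $\cL$ on $A$ by derivations that respect the bracket extends to an action of $U(\cL)$ on $A$ making $A$ a $U(\cL)$-module algebra, and in fact a $U(\cL)$-module Poisson algebra: the condition $\delta(\{a,b\}) = \{\delta(a),b\} + \{a,\delta(b)\}$ for $\delta \in \cL$ is precisely the statement that the primitive elements act as Poisson derivations, and this propagates to all of $U(\cL)$ by an easy induction on the length of monomials in $\cL$ (this is the content of the second example following the definition of $H$-module Poisson algebra). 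Then by the first part of the corollary, applied with $H = U(\cL)$, we conclude that $U(\cL)$ acts on $U(A)$ via \eqref{eq.hopf_ext}.

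I do not expect any serious obstacle here — the argument is essentially a one-line consequence of cocommutativity together with Theorem \ref{thm.enveloping}. The only point requiring a modicum of care is the bookkeeping in the ``swap $h_1 \leftrightarrow h_2$'' step: one should phrase it cleanly as applying a fixed bilinear map $U(A) \tensor U(A)$-valued functional to the equal elements $\sum h_1 \tensor h_2$ and $\sum h_2 \tensor h_1$ of $H \tensor H$, rather than manipulating Sweedler indices informally. For the ``in particular'' part, the mild subtlety is making sure that $A$ being a $U(\cL)$-module \emph{Poisson} algebra (not merely a module algebra) is what is needed to invoke Theorem \ref{thm.enveloping}, but this was already recorded in the examples, so nothing new is required.
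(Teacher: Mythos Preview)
Your proposal is correct and matches the paper's approach: the paper gives no explicit proof of this corollary, treating it as an immediate consequence of cocommutativity and Theorem~\ref{thm.enveloping}, which is exactly what you spell out. Your careful phrasing of the swap $h_1\leftrightarrow h_2$ as applying a bilinear map to $\sum h_1\otimes h_2=\sum h_2\otimes h_1$ is the right way to make the one-line argument rigorous, and your handling of the $U(\cL)$ case via the example already recorded in the paper is precisely what is intended.
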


\begin{corollary}\label{cor.weyl1}
Let $\pw$ be the $n$th Weyl Poisson algebra and let $H$ be a finite-dimensional Hopf algebra acting on $A$. If the action satisfies \eqref{eq.hopf_ext_cond1} and \eqref{eq.hopf_ext_cond2}, then the action extends to an action on $U(\pw)$ via \eqref{eq.hopf_ext}. But then $U(\pw)$ is the $2n$th Weyl algebra and so by \cite[Theorem 1.1]{CEW2}, the action factors through the action of a group algebra.
\end{corollary}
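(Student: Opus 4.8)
The plan is to deduce this from the rigidity theorem of Cuadra, Etingof and Walton for Weyl algebras, so that the only real work is to set up the reduction and then transfer the conclusion back to $\pw$. First I would apply Theorem~\ref{thm.enveloping}: since by hypothesis the action of $H$ on $\pw$ satisfies \eqref{eq.hopf_ext_cond1} and \eqref{eq.hopf_ext_cond2}, the formulas \eqref{eq.hopf_ext} make $U(\pw)$ into an $H$-module algebra, and by construction the canonical embedding $\mu\colon \pw\hookrightarrow U(\pw)$ is $H$-equivariant.

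Next I would identify $U(\pw)$ with the $2n$th Weyl algebra $A_{2n}(\kk)$. Write $\pw=\kk[u_1,v_1,\dots,u_n,v_n]$ with $\{u_i,u_j\}=\{v_i,v_j\}=0$ and $\{u_i,v_j\}=\delta_{ij}$. Since $\pw$ is generated as an algebra by the $u_i,v_i$, the relations $\mu_{ab}=\mu_a\mu_b$ and $\nu_{ab}=\mu_b\nu_a+\mu_a\nu_b$ show that $U(\pw)$ is generated by the finitely many elements $\mu_{u_i},\mu_{v_i},\nu_{u_i},\nu_{v_i}$. Substituting generators into the five families of defining relations of $U(\pw)$ shows that the $\mu$'s commute among themselves, the $\nu$'s commute among themselves (using $\nu_1=0$, which follows from $\nu_{1\cdot 1}=2\mu_1\nu_1$), and that the only nonzero mixed brackets are $[\nu_{u_i},\mu_{v_j}]=\delta_{ij}=-[\nu_{v_i},\mu_{u_j}]$. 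Hence there is a surjective algebra map $A_{2n}(\kk)\twoheadrightarrow U(\pw)$ carrying the $2n$ pairwise-commuting ``position'' generators of $A_{2n}(\kk)$ to $\mu_{u_1},\mu_{v_1},\dots,\mu_{u_n},\mu_{v_n}$ and the complementary ``momentum'' generators to $\pm\nu_{u_i}$, $\pm\nu_{v_i}$. As $\kk$ has characteristic zero, $A_{2n}(\kk)$ is simple; and $U(\pw)\neq 0$ since $\mu$ embeds $\pw$. Therefore the surjection is an isomorphism and $U(\pw)\cong A_{2n}(\kk)$.

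Then I would invoke \cite[Theorem~1.1]{CEW2} applied to the $H$-module algebra $U(\pw)\cong A_{2n}(\kk)$: taking $I\subseteq H$ to be the largest Hopf ideal with $I\cdot U(\pw)=0$, the quotient $H/I$ acts inner faithfully on $U(\pw)$, so $H/I\cong\kk G$ for a finite group $G$ and the action on $U(\pw)$ factors through the group algebra $\kk G$. Finally I would descend this to $\pw$: since $\mu$ is an $H$-equivariant embedding, $I\cdot U(\pw)=0$ gives $\mu_{I\cdot a}=I\cdot\mu_a=0$ and hence $I\cdot a=0$ for every $a\in\pw$; thus the original action of $H$ on $\pw$ also factors through $H/I\cong\kk G$, as claimed.

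I expect the step that needs genuine (if short) care to be the identification $U(\pw)\cong A_{2n}(\kk)$ — specifically, checking that no relations beyond the Weyl relations are forced among the generators $\mu_{u_i},\mu_{v_i},\nu_{u_i},\nu_{v_i}$, so that the surjection from $A_{2n}(\kk)$ must be an isomorphism by simplicity — together with the routine observation that $H$-equivariance of $\mu$ lets the conclusion pass from $U(\pw)$ back down to $\pw$. Everything else is a direct application of Theorem~\ref{thm.enveloping} and of \cite{CEW2}.
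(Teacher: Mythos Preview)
Your proposal is correct and follows the same approach as the paper: in the paper the corollary is stated without a separate proof, the argument being entirely contained in the statement itself (apply Theorem~\ref{thm.enveloping}, identify $U(\pw)$ with the $2n$th Weyl algebra, then invoke \cite[Theorem~1.1]{CEW2}). You have simply fleshed out the two steps the paper leaves implicit --- the explicit identification $U(\pw)\cong A_{2n}(\kk)$ via the simplicity argument, and the descent of the factoring-through-a-group-algebra conclusion from $U(\pw)$ back to $\pw$ via the $H$-equivariant embedding $\mu$ --- both of which are routine but worth recording.
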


\subsection{Rees algebras}

Let $A$ be a (Poisson) algebra, $F$ a (Poisson) filtration on $A$, and $H$ a Hopf algebra that acts on $A$. We say the $H$-action \emph{respects the filtration} if each $F_n A$ is an $H$-submodule.

\begin{theorem}\label{thm.rees}
Let $A$ be a Poisson algebra with Poisson filtration $F$. Let $H$ be a finite-dimensional Hopf algebra that acts on $A$ and respects the filtration $F$. Then there is an induced action of $H$ on $R(A)$ such that each homogeneous component of $R(A)$ is an $H$-module.
\end{theorem}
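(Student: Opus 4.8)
The plan is to transport the $H$-action on $A$ to $R(A)=\bigoplus_{n\ge 0}(F_nA)t^n$ one homogeneous component at a time. First I would define, for $a\in F_nA$,
\[
h\cdot(at^n):=(h\cdot a)t^n,
\]
and extend $\kk$-linearly to all of $R(A)$. Because the action respects the filtration, $F_nA$ is an $H$-submodule of $A$, so $h\cdot a\in F_nA$ and hence $h\cdot(at^n)$ again lies in $(F_nA)t^n$; thus each homogeneous component $(F_nA)t^n$ (which is just $F_nA$ as a vector space) is an $H$-submodule of $R(A)$, and $R(A)$, being their direct sum, is an $H$-module with the desired graded structure. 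Equivalently, one can let $H$ act on $\kk[t]$ trivially (via $h\cdot t=\epsilon(h)t$), observe that $A[t]\iso A\tensor\kk[t]$ is then an $H$-module Poisson algebra by the standard tensor-product construction (using the $A\tensor B$ Poisson structure from Section~\ref{sec.back} with $B=\kk[t]$ carrying the zero bracket), and note that $R(A)\subseteq A[t]$ is an $H$-stable Poisson subalgebra precisely because each $F_nA$ is $H$-stable.

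Next I would check that the component-wise action respects the algebra and Poisson relations of $R(A)$; by bilinearity it suffices to test on homogeneous elements. For the unit, $h\cdot 1_{R(A)}=h\cdot(1_At^0)=(h\cdot 1_A)t^0=\epsilon(h)1_{R(A)}$. For $a\in F_nA$ and $b\in F_mA$ we have $(at^n)(bt^m)=(ab)t^{n+m}$ with $ab\in F_{n+m}A$, so, using that $A$ is an $H$-module algebra,
\[
h\cdot\bigl((at^n)(bt^m)\bigr)=\bigl(h\cdot(ab)\bigr)t^{n+m}=\sum(h_1\cdot a)(h_2\cdot b)\,t^{n+m}=\sum\bigl(h_1\cdot(at^n)\bigr)\bigl(h_2\cdot(bt^m)\bigr).
\]
Likewise, since $\{at^n,bt^m\}=\{a,b\}t^{n+m}$ with $\{a,b\}\in F_{n+m}A$ — the very inclusion that makes $R(A)$ closed under the bracket — and since $A$ is an $H$-module Poisson algebra,
\[
h\cdot\{at^n,bt^m\}=\bigl(h\cdot\{a,b\}\bigr)t^{n+m}=\sum\{h_1\cdot a,h_2\cdot b\}t^{n+m}=\sum\{h_1\cdot(at^n),h_2\cdot(bt^m)\}.
\]
Hence $R(A)$ is an $H$-module Poisson algebra whose grading is by $H$-submodules.

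There is no substantive obstacle here: the only points requiring care are that the component-wise formula lands back inside $R(A)$ — which is exactly the hypothesis that the action respects the filtration — and the routine bookkeeping of degrees in products and brackets. I would also remark that finite-dimensionality of $H$ is not actually needed for this statement; it is retained only for uniformity with the applications that follow, such as Corollary~\ref{cor.weyl1} and Theorem~\ref{thm.quant}.
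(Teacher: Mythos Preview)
Your proposal is correct and follows essentially the same approach as the paper: both extend the $H$-action to $R(A)$ by letting $t$ be fixed (equivalently, by acting on each homogeneous piece $(F_nA)t^n$ via the given action on $F_nA$) and then verify the Poisson compatibility by the same degree-bookkeeping computation $h\cdot\{at^n,bt^m\}=\sum\{h_1\cdot(at^n),h_2\cdot(bt^m)\}$. The only cosmetic differences are that the paper phrases the definition as $h\cdot t=\epsilon(h)t$ and outsources the associative-algebra check to \cite[Lemma~4.2]{CWWZ2014}, whereas you verify that step directly; your observation that finite-dimensionality of $H$ is not used is also correct.
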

\begin{proof}
We extend the action of $H$ on $A$ to an action of $H$ on the Rees algebra $R(A)$ by setting $h \cdot t = \epsilon(h)t$. An inductive argument shows that $h(t^k) = \epsilon(h)t^k$ for all $k \geq 1$. That this defines an action on the algebra $A$ follows from \cite[Lemma 4.2]{CWWZ2014}. It remains only to check the Poisson structure. Let $a \in A$ and $h \in H$. Then
\[
h \cdot (\{t,a\}) = \{ h_1 \cdot t, h_2 \cdot a\}
    = \{ \epsilon(h_1)t, h_2 \cdot a\}
    = \epsilon(h_1) \{ t, h_2 \cdot a\}
    = 0.
\]
Now, let $a \in F_n A$ and $b \in F_m A$. Then
\begin{align*}
\{ h(at^n),h(bt^m) \}
    &= \{ h_1(a)h_2(t^n), h_3(b)h_4(t^m) \} \\
    &= \epsilon(h_2)\epsilon(h_4) \{ h_1(a),h_3(b) \} t^{n+m} \\
    &= \epsilon(h_2) h_1(\{ a,b \} t^{n+m} )\\
    &= h_1(\{a,b\}) h_2(t^{n+m}) \\
    &= h \cdot \left(\{a,b\} t^{n+m} \right) \\
    &= h \cdot \{ at^n,bt^m \}.
\end{align*}
This gives the result.
\end{proof}

\subsection{Quantizations}
In this section, we establish the rigidity of quadratic Poisson algebras relative to finite-dimensional Hopf actions by demonstrating explicitly how one may lift actions to quantizations.

Let $P=\kk[x_1,\ldots,x_n]$ be a quadratic Poisson algebra. Suppose $H$ is a finite-dimensional Hopf algebra acting on $P$ while preserving its gradings. So, the generating space $P_1=\kk x_1\oplus \cdots \oplus \kk x_n$ is an $H$-module. For any scalar $\lambda$, we set 
\begin{align}\label{KerT}
\cR_\lambda~:=~{\rm Ker}\left(P_1\otimes P_1\hookrightarrow P\otimes P\xrightarrow{\sigma_\lambda} P\right) 
\end{align}
where $\sigma_\lambda=\mu+\lambda\{-,-\}$ such that $\mu: P\otimes P\to P$ is the multiplication on $P$. We denote the corresponding  associative graded algebra
\[P_\lambda~:=~\kk\langle P_1\rangle/(\cR_\lambda).\]

\begin{theorem}\label{thm.quant}
Let $P=\kk[x_1,\ldots,x_n]$ be a quadratic Poisson algebra and let $H$ be a finite-dimensional Hopf algebra acting on $P$ while preserving degree. For any $\lambda\in \kk$, there is a corresponding graded $H$-action on the associative graded algebra $P_\lambda$. Moreover, if there is some $\lambda\in \kk$ such that any graded finite-dimensional Hopf action on $P_\lambda$ must factor through a group algebra, then the $H$-action on $P$ also factors through a group algebra.  
\end{theorem}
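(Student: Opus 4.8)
The plan is to show two things: first, that the prescribed formulas genuinely define a graded $H$-module algebra structure on $P_\lambda$ for every $\lambda$; second, that inner faithfulness is transported from $P$ to $P_\lambda$, so that a factorization-through-a-group-algebra statement for $P_\lambda$ pulls back to $P$. For the first part, since $H$ preserves the grading on $P$, the space $P_1$ is an $H$-module, hence so is $P_1 \otimes P_1$ with the diagonal action. The key point is that the subspace $\cR_\lambda$ defined in \eqref{KerT} is an $H$-submodule of $P_1 \otimes P_1$: this is because the map $\sigma_\lambda = \mu + \lambda\{-,-\}$ is a morphism of $H$-modules, since both $\mu$ and $\{-,-\}$ are $H$-equivariant (the former because $P$ is an $H$-module algebra, the latter because $P$ is an $H$-module Poisson algebra), and $\cR_\lambda$ is by definition its kernel restricted to $P_1 \otimes P_1$. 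Therefore the two-sided ideal $(\cR_\lambda)$ of the free algebra $\kk\langle P_1\rangle$ is $H$-stable, and the diagonal $H$-action on $\kk\langle P_1\rangle$ (which is well-defined because the tensor algebra on an $H$-module is canonically an $H$-module algebra) descends to a graded $H$-module algebra structure on the quotient $P_\lambda = \kk\langle P_1\rangle/(\cR_\lambda)$.

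For the second part, suppose some $\lambda_0 \in \kk$ has the property that every graded finite-dimensional Hopf action on $P_{\lambda_0}$ factors through a group algebra. Let $H$ act inner faithfully on $P$ preserving degree; I want to conclude $H$ is a group algebra (equivalently, the action factors through one, but inner faithfulness lets us identify $H$ with its image). Apply the construction with $\lambda = \lambda_0$ to get a graded $H$-action on $P_{\lambda_0}$. Let $I \subseteq H$ be the Hopf ideal such that $H/I$ is the largest quotient through which the $P_{\lambda_0}$-action factors; by hypothesis $H/I$ is a group algebra. The crucial claim is $I = 0$, which follows once we show $I P = 0$ and invoke inner faithfulness of the $P$-action. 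To see $I P = 0$: the $H$-action on $P_1$ is, by construction, literally the same as the $H$-action on the degree-one part of $P_{\lambda_0}$ (both are the given action on $P_1$), so $I$ annihilates $P_1$; since $P$ is generated as an algebra by $P_1$ and $I$ is a coideal with $I \subseteq \ker\epsilon$, a standard argument — $h \cdot (ab) = (h_1 \cdot a)(h_2 \cdot b)$ together with $\Delta(I) \subseteq I \otimes H + H \otimes I$ — shows $I$ annihilates all of $P$. Hence $I P = 0$, so $I = 0$, so $H \cong H/I$ is a group algebra, as desired.

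The step I expect to require the most care is verifying that $\cR_\lambda$ is an $H$-submodule, or more precisely that the diagonal $H$-action on $\kk\langle P_1\rangle$ restricts correctly to $\cR_\lambda$ sitting inside $P_1 \otimes P_1 \cong (\kk\langle P_1\rangle)_2$. The content is the $H$-equivariance of $\sigma_\lambda$, which is immediate from the module-Poisson-algebra axioms, but one must be careful that \eqref{KerT} uses the \emph{diagonal} comodule/module structure on $P \otimes P$ matching the Poisson structure on $P \otimes H$ or $H \otimes P$ described in the Background section, and that this is compatible with the composite map through which $\cR_\lambda$ is defined. The transfer of inner faithfulness in the second part is routine once one notes the degree-one actions coincide; the only subtlety there is making sure "factors through a group algebra" for $H$ is read as: the inner faithful quotient of $H$ is a group algebra — but since we assume $H$ acts inner faithfully on $P$ to begin with, $H$ itself must then be a group algebra.
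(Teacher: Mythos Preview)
Your proposal is correct and follows essentially the same line as the paper's proof: both argue that $\sigma_\lambda$ is $H$-linear (hence $\cR_\lambda$ is an $H$-submodule and the action descends to $P_\lambda$), and both transport the group-algebra factorization back to $P$ by noting that the Hopf ideal $I$ killing $P_\lambda$ already kills $P_1$, whence, via the coideal condition $\Delta(I)\subseteq I\otimes H + H\otimes I$, it kills all of $P$. Your write-up is a bit more explicit about why the tensor algebra on $P_1$ is an $H$-module algebra and why $IP_1=0$ implies $IP=0$; the paper compresses these into one line, but the content is the same.
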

\begin{proof}
Since $\sigma_\lambda$ is $H$-linear, then $\cR_\lambda$ is an $H$-submodule of $P_1\otimes P_1$. As a consequence, the $H$-module structure on $P_1$ induces a graded $H$-action on the associative algebra $P_\lambda=\kk\langle P_1\rangle/(\cR_\lambda)$. Moreover, it is clear that $P_0\cong P$ and $H$ acts inner-faithfully on $P_\lambda$ if it does on $P$. 

Now, suppose there is some $\lambda\in \kk$ such that any graded finite-dimensional Hopf action on $P_\lambda$ must factor through a group algebra. Then there is a Hopf ideal $I$ of $H$ such that $IP_1=0$ and $H/I$ is a group algebra. Certainly, $I(P_1\otimes P_1)=0$ and $\mu,\{-,-\}: P_1\otimes P_1\to P_1$ are $H/I$-module maps. Therefore, the $H$-action on the polynomial Poisson algebra $P$ factors through $H/I$.
\end{proof}

We now give several applications of Theorem \ref{thm.quant}. Let $\bq=(q_{ij})$ and we can view $\bq$ as a point of the algebraic torus $(\kk^\times)^{n(n-1)/2}$ with coordinates $q_{ij}$, $i<j$. Let $\langle \bq\rangle$ be the subgroup in $(\kk^\times)^{n(n-1)/2}$ generated by $\bq$, and let $G(\bq)$ be its Zariski closure, where $G^0(\bq)$ is the connected component of the identity in $G_\bq$.

\begin{corollary}\label{cor.Gskew}
Let $\bc \in M_n(\kk)$ be skew-symmetric and let $\PC$ be the corresponding Poisson algebra defined in Example \ref{ex.skewsym}. Assume there is a generic number $\lambda$ with $\bq=(q_{ij})$ where $q_{ij}=\frac{1+\lambda c_{ij}}{1-\lambda c_{ij}}$ such that the order of $G(\bq)/G^0(\bq)$ is coprime to $d!$, and the bicharacter $\bq$ is nondegenerate. If $H$ is a finite-dimensional Hopf algebra of dimension $d$ acting on $\PC$ while preserving the grading, then the action of $H$ on $\PC$ factors through a group algebra.
\end{corollary}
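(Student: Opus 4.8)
The plan is to invoke Theorem~\ref{thm.quant} for a suitable value of $\lambda$, reducing the claim to a known classification of finite-dimensional Hopf actions on quantum affine spaces. First I would identify the quantization $P_\lambda$ of $\PC$ explicitly. Writing $\sigma_\lambda = \mu + \lambda\{-,-\}$ and using $\{u_i,u_j\} = c_{ij}u_iu_j$ with $c_{ji} = -c_{ij}$, one computes, for $i<j$,
\[ \sigma_\lambda(u_i \otimes u_j) = (1+\lambda c_{ij})\,u_iu_j, \qquad \sigma_\lambda(u_j \otimes u_i) = (1-\lambda c_{ij})\,u_iu_j, \qquad \sigma_\lambda(u_i \otimes u_i) = u_i^2. \]
Thus for every $\lambda$ outside the finite set $\{\pm c_{ij}^{-1} : i<j,\ c_{ij}\neq 0\}$, the image of $\sigma_\lambda$ on $P_1\otimes P_1$ is the whole degree-two part $\mathrm{span}\{u_iu_j : i\le j\}$, so $\cR_\lambda$ has dimension $\binom{n}{2}$ and is spanned by the tensors $(1-\lambda c_{ij})\,u_i\otimes u_j - (1+\lambda c_{ij})\,u_j\otimes u_i$ for $i<j$. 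Consequently
\[ P_\lambda \;\cong\; \kk\langle u_1,\ldots,u_n\rangle\big/\big(u_iu_j - q_{ij}\,u_ju_i : i<j\big), \qquad q_{ij} = \frac{1+\lambda c_{ij}}{1-\lambda c_{ij}}, \]
which is precisely the quantum affine $n$-space attached to the bicharacter $\bq = (q_{ij})$ (with $q_{ij} = 1$ whenever $c_{ij} = 0$).

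Next I would fix the value of $\lambda$ promised by the hypothesis, so that $\bq$ is nondegenerate and the order of $G(\bq)/G^0(\bq)$ is coprime to $d!$. By Theorem~\ref{thm.quant} the grading-preserving $H$-action on $\PC$ induces a graded $H$-action on $P_\lambda$, and it suffices to show that every graded action of a $d$-dimensional Hopf algebra on this quantum affine space factors through a group algebra. This is exactly what the classification of finite-dimensional Hopf actions on quantum affine spaces provides under the present hypotheses: there is a Hopf ideal $I$ of $H$ with $I P_1 = 0$ and $H/I$ a group algebra. Then, as in the proof of Theorem~\ref{thm.quant}, $I(P_1\otimes P_1) = 0$ and the structure maps $\mu$ and $\{-,-\}$ on $P_1\otimes P_1$ are $H/I$-module maps, so the $H$-action on $\PC$ factors through $H/I$, which is the desired conclusion.

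The main obstacle is the middle step: locating and correctly applying a classification theorem for graded Hopf actions on quantum affine spaces formulated in terms of $G(\bq)$, $G^0(\bq)$, and the nondegeneracy of $\bq$. The hypotheses of the corollary are tailored to match those of that theorem, so once the right result is in hand the remainder is bookkeeping. A secondary subtlety is the twofold demand on the word ``generic'': the chosen $\lambda$ must avoid the finitely many values $\pm c_{ij}^{-1}$ (so the quantum-affine-space description of $P_\lambda$ is valid) while also producing a $\bq$ meeting the arithmetic and nondegeneracy conditions; since such a $\lambda$ is assumed to exist, one only needs to verify that the identification of $P_\lambda$ above holds on that Zariski-open locus.
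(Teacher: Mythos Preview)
Your proposal is correct and follows essentially the same approach as the paper: identify $P_\lambda$ as the quantum affine space $S_\bq$ via a dimension count on $\cR_\lambda$, then invoke Theorem~\ref{thm.quant} together with \cite[Theorem~1.9]{EW2016}. The only cosmetic difference is that the paper phrases the dimension count as a Hilbert-series comparison between $S_\bq$ and $P$, whereas you compute $\dim\cR_\lambda$ directly from the image of $\sigma_\lambda$; your added remark on which values of $\lambda$ must be avoided is a helpful clarification of the word ``generic.''
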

\begin{proof}
Because $\lambda$ is generic, $P_\lambda$ is a graded algebra, and we have a surjection $\kk\langle v_1,\ldots,v_n\rangle\twoheadrightarrow P_\lambda$ via $v_i\mapsto u_i$. Since
\[
\sigma_\lambda(u_i\otimes u_j)=\mu(u_i\otimes u_j)+\lambda\{u_i,u_j\}=(1+\lambda\,c_{ij})u_iu_j=q_{ij}(1-\lambda\, c_{ij})u_iu_j=q_{ij}\sigma_\lambda(u_j\otimes u_i),
\]
we have $\{v_i\otimes v_j-q_{ij}v_j\otimes v_i \mid 1\le i,j\le n\}\subseteq \cR_\lambda$. 
Then $S_\bq=\kk\langle v_1,\ldots,v_n\rangle/(v_iv_j-q_{ij}v_jv_i)$ is a skew polynomial ring, which has the same Hilbert series as the polynomial ring $P$. In particular, $\cR_\lambda=(v_i\otimes v_j-q_{ij}v_j\otimes v_i \mid 1\le i,j\le n)$ by a dimension counting argument. Consequently, $S_\bq \cong P_\lambda$ as associative algebras. The result now follows from the corresponding result for finite-dimensional Hopf actions on skew polynomial rings \cite[Theorem  1.9]{EW2016}.
\end{proof}

The next result establishes that the $n$th Weyl Poisson algebra $\pw$ is rigid with respect to linear actions by finite-dimensional Hopf algebras. We conjecture that the result holds without the linear hypothesis. See Conjecture \ref{conj.pweyl}.

\begin{corollary}\label{cor.weyl}
Let $H$ be a finite-dimensional Hopf algebra acting on the $n$th Weyl Poisson algebra $\pw$ while preserving its standard filtration. Then, the action of $H$ factors through a group algebra. 
\end{corollary}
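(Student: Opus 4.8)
The plan is to lift the filtered $H$-action on $\pw$ to a \emph{graded} action on the Rees algebra $R(\pw)$, recognize the relevant quantization as the homogenized $2n$th Weyl algebra, and then invoke the rigidity of the Weyl algebra \cite[Theorem 1.1]{CEW2}. Concretely: since $H$ is finite-dimensional and respects the standard filtration $F$ of $\pw$, Theorem \ref{thm.rees} produces an induced $H$-action on $P:=R(\pw)=\pw[t]$ under which each homogeneous component is an $H$-submodule and, by construction, $h\cdot t=\epsilon(h)t$. By Example \ref{ex.pweyl}, $P=\kk[u_1,v_1,\dots,u_n,v_n,t]$ with $\{u_i,v_j\}=\delta_{ij}t^2$ and all other generator brackets zero; assigning degree $1$ to every generator makes $P$ a quadratic Poisson algebra (Example \ref{ex.quadratic}), and this grading coincides with the Rees grading, so the $H$-action preserves degree. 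Since $(t-1)$ is $H$-stable and $P/(t-1)\iso\pw$ as $H$-modules (the induced action being the original one), it suffices, via Theorem \ref{thm.quant}, to exhibit some $\lambda$ for which every graded finite-dimensional Hopf action on $P_\lambda$ factors through a group algebra.

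Next I would compute the ideal $\cR_\lambda$ of \eqref{KerT} for $P=R(\pw)$. One checks that for every $\lambda\neq 0$ the quantization $P_\lambda$ is the associative algebra on generators $u_i,v_i,t$ with $t$ central, $[u_i,u_j]=[v_i,v_j]=0$, $[u_i,v_j]=0$ for $i\neq j$, and $[u_i,v_i]=\lambda t^2$; a Hilbert-series/PBW count confirms that these relations span $\cR_\lambda$ and that $P_\lambda$ has the Hilbert series of $\kk[u_i,v_i,t]$. After rescaling $t$, this identifies $P_\lambda$ with the Rees algebra $R(A_{2n})$ of the $2n$th Weyl algebra $A_{2n}$ taken with respect to the filtration giving each generator degree one (the Bernstein filtration), i.e. with the homogenization of $A_{2n}$. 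The induced graded action on $P_\lambda$ still fixes $t$, because the underlying $H$-module $P_1=(P_\lambda)_1$ is unchanged.

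Then I would dehomogenize. Since $h\cdot t=\epsilon(h)t$, the ideal $(t-1)$ is $H$-stable in $P_\lambda$ and $P_\lambda/(t-1)\iso A_{2n}$ carries a finite-dimensional Hopf action, which factors through a group algebra by \cite[Theorem 1.1]{CEW2}; fix a Hopf ideal $I\subseteq H$ with $I\cdot A_{2n}=0$ and $H/I$ a group algebra. To push $I$ back up, write $P_\lambda=R(A_{2n})=\bigoplus_{d\ge 0}(F_dA_{2n})t^d$, so that $t\mapsto 1$ restricts to an isomorphism of each homogeneous piece $(P_\lambda)_d$ onto $F_dA_{2n}\subseteq A_{2n}$; in particular the quotient map $P_\lambda\to A_{2n}$ is injective on each graded piece. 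Hence for $y\in I$ and $\xi\in P_\lambda$ homogeneous, $y\cdot\xi$ is homogeneous of the same degree and has image $y\cdot\overline\xi=0$ in $A_{2n}$, so $y\cdot\xi=0$; thus $IP_\lambda=0$, whence $IP_1=0$. Since $I$ is a coideal, $I(P_1\tensor P_1)=0$, and because $\mu$ and $\{-,-\}$ from $P_1\tensor P_1$ to $P$ are $H$-equivariant while $P$ is generated in degree $1$, this forces $IP=0$. Passing to the quotient $P/(t-1)\iso\pw$ then shows the $H$-action on $\pw$ factors through the group algebra $H/I$, as required.

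The step I expect to be the main obstacle is the passage between $P_\lambda$ and $A_{2n}$: one must verify that $P_\lambda$ is flat — has the Hilbert series of $\kk[u_i,v_i,t]$ — so that it genuinely is the homogenized Weyl algebra rather than some proper quotient, and one must make the descent to $A_{2n}$ and the ascent of the Hopf ideal work. The ascent is clean here only because Theorem \ref{thm.rees} makes $t$ a \emph{fixed} (not merely semi-invariant) element; to verify the hypothesis of Theorem \ref{thm.quant} for an arbitrary graded action on $P_\lambda$ one would additionally have to handle a possible character twist on the line $\kk t\subseteq P_\lambda$, which is the delicate point.
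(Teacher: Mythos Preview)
Your proof is correct and follows the same route as the paper: lift to the Rees algebra via Theorem~\ref{thm.rees}, identify $P_\lambda$ (the paper takes $\lambda=\tfrac12$) with the homogenized Weyl algebra, dehomogenize by $(t-1)$, and invoke \cite[Theorem~1.1]{CEW2}. Two minor remarks: the dehomogenization is the $n$th (not $2n$th) Weyl algebra, and your concern about the ``every graded action'' hypothesis of Theorem~\ref{thm.quant} is moot since---as both you and the paper in effect do---one only needs the \emph{specific} induced action on $P_\lambda$ to furnish a Hopf ideal with $IP_1=0$, which your Rees-lift argument establishes more explicitly than the paper does.
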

\begin{proof}
Recall that $R(\pw)=\kk[u_1,v_1,\ldots,u_n,v_n,t]$ is the Rees algebra of $\pw$, as discussed in Example \ref{ex.pweyl}. Define the quantization of $R(\pw)$ as
\[ B = \kk\langle x_1,y_1,\ldots,x_n,y_n,z\rangle/(x_iy_j-y_jx_i,x_ix_j-x_jx_i-\delta_{ij}z^2,y_iy_j-y_jy_i-\delta_{ij}z^2,zx_i-x_iz,zy_i-y_iz).\]
The mappings $x_i\mapsto u_i$, $y_i\mapsto v_i$, $z\mapsto t$ define a surjection $B \twoheadrightarrow R(\pw)$. Since $B$ has the same Hilbert series as the polynomial algebra $R(\pw)$, then 
\[\cR_{1/2}=(u_iv_j-v_ju_i,u_iu_j-u_ju_i-\delta_{ij}t^2,v_iv_j-v_jv_i-\delta_{ij}t^2,tu_i-u_it,tv_i-v_it),\] 
and hence $B\cong \kk\langle u_i,v_i,t\rangle/(\cR_{1/2})$ as associative algebras.

By Theorem \ref{thm.rees}, the action of $H$ on $\pw$ extends to an action of $H$ on the Rees algebra $R(\pw)$ by requiring that $H$ acts on $t$ trivially. As discussed above, $H$ acts on $B$, where the action of $H$ on $z$ is again trivial. Hence, $H$ acts on the quotient algebra $B/(t-1)$, which is isomorphic to the $n$th Weyl algebra. Our result now follows from \cite[Theorem 1.1]{CEW2}.
\end{proof}

Our final rigidity result concerns filtered Poisson algebras on a polynomial ring in two variables.

\begin{corollary}\label{cor.filtered}
Let $P=\kk[u_1,u_2]$ be a filtered quadratic Poisson algebra with a nontrivial Poisson bracket. Suppose $H$ is a finite-dimensional Hopf algebra that acts on $P$ while preserving the filtration. Then, the action of $H$ on $P$ factors through a group algebra.  
\end{corollary}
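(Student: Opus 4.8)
The plan is to run the same machine as in Corollary~\ref{cor.weyl}: lift the action to the Rees algebra, where the bracket becomes genuinely quadratic, quantize via Theorem~\ref{thm.quant}, and then kill the extra central variable to land on a familiar algebraic quantization of $P$ itself.

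First I would write $\{u_1,u_2\} = \alpha + \ell + q$ with $\alpha \in \kk$, $\ell$ a linear form, and $q \in P_2$ a binary quadratic form, not all zero. By Theorem~\ref{thm.rees} the $H$-action extends to the Rees algebra $R(P) = \kk[u_1,u_2,t]$, which is now a quadratic Poisson algebra with $\{u_1,u_2\} = q + t\ell + \alpha t^2$ and $\{u_i,t\} = 0$, and on which $H$ preserves the grading and fixes $t$. Applying the first half of Theorem~\ref{thm.quant} to $R(P)$, for every $\lambda$ we get a graded $H$-module algebra $B_\lambda = \kk\langle x_1,x_2,z\rangle/(\cR_\lambda)$; because $\{u_i,t\} = 0$ forces $u_i \tensor t - t \tensor u_i \in \cR_\lambda$, the image $z$ of $t$ is central, and it is fixed by $H$. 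A direct computation presents $B_\lambda$ as an iterated Ore extension of $\kk[z]$, so for $\lambda \neq 0$ it is a noetherian AS-regular domain of global dimension three with Hilbert series $(1-s)^{-3}$, and $\overline{B}_\lambda := B_\lambda/(z-1)$ is a filtered algebra whose associated graded is the corresponding quantization of $\gr_F(P)$, i.e., a deformation quantization of the Poisson algebra $P$.

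Next I would reduce to a short list of normal forms. Since $H$ preserves $F_1 P$, every \emph{affine} change of coordinates on $\kk[u_1,u_2]$ is compatible with the $H$-action, and up to such a change together with rescaling the nonzero filtered-quadratic bracket $\{u_1,u_2\}$ may be taken to be one of $1$, $u_1$, $u_1u_2$, $u_1u_2+1$, $u_1^2$, $u_1^2+1$, or $u_1^2+u_2$. Running the construction above in each case identifies $\overline{B}_\lambda$ for generic $\lambda$ (so that all relevant parameters avoid roots of unity): for $\{u_1,u_2\}=1$ one gets the first Weyl algebra $\fwa$, for $u_1$ the enveloping algebra $U(\fg)$ of the two-dimensional nonabelian Lie algebra $\fg$, for $u_1u_2$ a quantum plane $\kk_p[x_1,x_2]$ with $p = \tfrac{1+\lambda/2}{1-\lambda/2}$ not a root of unity, for $u_1u_2+1$ the first quantum Weyl algebra $A_1^p(\kk)$, for $u_1^2$ the Jordan plane, and for the remaining two a filtered deformation of the Jordan plane. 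The Weyl case already follows from Corollary~\ref{cor.weyl}; for each of the others, finite-dimensional Hopf actions are known to factor through a group algebra --- via \cite[Theorem~1.1]{CEW2} and its analogue for the (simple, when $p$ is not a root of unity) quantum Weyl algebra, via \cite[Theorem~1.9]{EW2016} for the quantum plane, and via the corresponding rigidity results for the Jordan plane, for $U(\fg)$, and for their filtered deformations. Granting this, if $H$ acts on $\overline{B}_\lambda$ then the action factors through a group algebra $H/I$, so the Hopf ideal $I$ annihilates the generators $\bar x_1, \bar x_2$; since $I$ also annihilates $z$ and $I \cdot x_i$ is homogeneous of degree one in $B_\lambda$ while $(z-1)B_\lambda$ meets degree one trivially (as $B_\lambda$ is a domain), $I$ annihilates $x_1,x_2,z$ in $B_\lambda$, hence annihilates $u_1,u_2 \in R(P)$, hence $I \cdot P_1 = 0$; as $I$ is a Hopf ideal and $P$ is generated by $P_1$, we conclude $I \cdot P = 0$, i.e., the $H$-action on $P$ factors through the group algebra $H/I$.

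The main obstacle is the third paragraph: one has to carry out the affine normal-form classification honestly and, more seriously, supply a rigidity statement for every resulting quantization $\overline{B}_\lambda$. The delicate ones are exactly those for which $\overline{B}_\lambda$ is not simple --- the quantum plane at a non-root-of-unity parameter, the Jordan plane, $U(\fg)$, and their filtered deformations --- where the Cuadra--Etingof--Walton simplicity argument for the Weyl algebra does not apply and one instead needs either a general rigidity theorem for graded finite-dimensional Hopf actions on AS-regular algebras of low GK-dimension, or a case-by-case analysis; in every case the genericity of $\lambda$ is what rules out the exceptional algebras on which Taft-type actions do exist.
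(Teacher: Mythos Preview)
Your architecture matches the paper's exactly: lift to the Rees algebra via Theorem~\ref{thm.rees}, quantize via Theorem~\ref{thm.quant}, and set the central variable to $1$ to land on a two-generated filtered algebra carrying the $H$-action. The only divergence is in the endgame. You propose to put $\{u_1,u_2\}$ into normal form and then identify each resulting $\overline{B}_\lambda$ as a specific model (Weyl, quantum plane, quantum Weyl, Jordan plane, $U(\fg)$, filtered deformations thereof), invoking a separate rigidity theorem for each---and you rightly flag that several of those individual results are not readily available in the form you need. The paper sidesteps this entirely: it observes that for every nonzero $f$ with $\deg f\le 2$ the quotient $B/(t-1)$ is a filtered Artin--Schelter regular algebra of global dimension two, and then applies a single result, \cite[Theorem~0.1]{CWWZ2014}, which asserts that any filtration-preserving finite-dimensional Hopf action on such an algebra factors through a group algebra. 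That one citation replaces your entire case analysis and closes precisely the gap you identified; the normal-form list (which the paper draws from \cite[Theorem~3.1]{GWY}, slightly different from yours) then serves only to confirm AS-regularity, and the genericity of $\lambda$ becomes irrelevant since the paper simply works at $\lambda=1/2$. Your careful descent argument pulling the Hopf ideal $I$ back from $\overline{B}_\lambda$ to $P$ is more explicit than what the paper writes, but it is the same mechanism.
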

\begin{proof}
Since $P$ is a filtered quadratic Poisson algebra with nontrivial bracket, then $\{u_1,u_2\} = f$ with $\deg(f) \leq 2$. By \cite[Theorem 3.1]{GWY}, up to a change of variable, $f$ is one of the following polynomials:
\[ \quad 1, \quad u_1, \quad u_1^2, \quad u_1^2+1, \quad qu_1u_2, \quad qu_1u_2 + 1.\]
with $q \in \kk^\times$. If $f=1$, then $P$ is the first Weyl Poisson algebra, and so the result in this case follows from Corollary \ref{cor.weyl}.

Let $R(P)$ be the Rees algebra of $P$ with respect to the given (standard) filtration. Let $\widetilde{f}$ be the homogenization of $f$ with respect to $t$, so that $R(P)=\kk[u_1,u_2,t]$ with bracket
\[ \{u_1,u_2\} = \widetilde{f}, \qquad \{t,u_1\}=\{t,u_2\}= 0.\]
By Theorem \ref{thm.rees}, the action of $H$ on $P$ lifts to an action on $R(P)$ by letting $H$ act trivially on $t$. 

Define the quantization of $R(P)$ as
\[ B = \kk\langle x_1,x_2,z \rangle/(x_1x_2-x_2x_1-\widetilde{f}(x_1,x_2),x_1t-tx_1,x_2t-tx_2).\]
The mappings $x_1 \mapsto u_1$, $x_2\mapsto u_2$, $z\mapsto t$ define a surjection $B \twoheadrightarrow R(P)$. Since $B$ has the same Hilbert series as the polynomial algebra $R(P)$, then 
\[\cR_{1/2}=(u_1u_2-u_2u_1-\widetilde{f}(u_1,u_2), u_1t-tu_1, u_2t-tu_2),\] 
and hence $B\cong \kk\langle x,y,t \rangle/(\cR_{1/2})$ as associative algebras. Since $H$ acts trivially on $t$, then $H$ acts on the quotient $B/(t-1)$, which is a filtered Artin--Schelter regular algebra. Since the $H$-action preserves the filtration on $P$, then the action factors through a group algebra by \cite[Theorem 0.1]{CWWZ2014}.
\end{proof}

\section{Taft algebra actions}
\label{sec.taft}

Taft algebras play a crucial role in the study of non-semisimple Hopf algebras. They are \emph{pointed} Hopf algebras, that is, every simple comodule is one-dimensional. All pointed Hopf algebras of dimension $p$ or $p^2$, $p$ a prime, are either a group algebra or a Taft algebra. Actions of Taft algebras on Artin--Schelter regular algebras have been studied in \cite{CG,CGW,GWY}.
Here we consider linear actions on Poisson polynomial algebras.

\begin{definition}\label{defn.taft}
Let $n>1$ be an integer and let 
$\lambda \in \kk^\times$ be a primitive $n$th root of unity.
The \emph{Taft algebra} $T_n(\lambda)$ is the $\kk$-algebra
\[ T_n(\lambda) = \kk\langle g,x \mid gx-\lambda xg, g^n-1, x^n \rangle.\]
The Taft algebra is a Hopf algebra with counit $\epsilon$, comultiplication $\Delta$, and antipode $S$ given by
\begin{align*}
    \epsilon(g) &= 1    &   \Delta(g) &= g \tensor g & S(g) &= g\inv \\
    \epsilon(x) &= 0    &   \Delta(x) &= g \tensor x + x\tensor 1 & S(x) &= -g\inv x.
\end{align*}
That is, $g$ is grouplike and $x$ is $(1,g)$-primitive.
\end{definition}

Let $P=\kk[u_1,\hdots,u_m]$ and $T=T_n(\lambda)$. Assume that $P$ is a $T$-module algebra and that the action of $T$ on $P$ is graded. According to Allman \cite{All}, we may diagonalize so that, up to a change of variable, the action of $T$ on $P$ is given by 
\begin{align}\label{eq.taft}
g(u_i) = \begin{cases}
    \lambda\inv u_1 & \text{if $i=1$} \\
    u_i & \text{if $1 < i \leq m$}
\end{cases} \qquad
x(u_i) = \begin{cases}
    u_2 & \text{if $i=1$} \\
    0 & \text{if $1 < i \leq m$}.
\end{cases}
\end{align}
The invariants under this action are
\begin{align}\label{eq.Taft_inv}
P^T = P^{\grp{g}} \cap P^{\grp{x}} = \kk[u_1^n,u_2,\hdots,u_m].
\end{align}

We will shortly show that it is possible to classify linear Taft actions on polynomial Poisson algebras. First, we establish that all such actions extend to the enveloping algebra.

\begin{lemma}\label{lem.taft1}
Let $P=\kk[u_1,\hdots,u_m]$ be a Poisson algebra. Suppose $T=T_n(\lambda)$ acts linearly on $P$ and the action is given as in \eqref{eq.taft}. Then $\{u_1,u_2\}=0$. Consequently, if $m=2$, then the Poisson bracket is trivial.
\end{lemma}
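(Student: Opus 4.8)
\emph{Proof proposal.} The plan is to exploit the $T$-module Poisson algebra axiom $h \cdot \{a,b\} = \{h_1 \cdot a, h_2 \cdot b\}$ applied to the pair $(u_1,u_2)$, first with $h=g$ and then with $h=x$. Since $g$ is grouplike, $g \cdot \{u_1,u_2\} = \{g \cdot u_1, g \cdot u_2\} = \{\lambda\inv u_1, u_2\} = \lambda\inv\{u_1,u_2\}$, so $\{u_1,u_2\}$ lies in the $\lambda\inv$-eigenspace $V$ of $g$ acting on $P$. Using $\Delta(x) = g \tensor x + x \tensor 1$ we get $x \cdot \{u_1,u_2\} = \{g \cdot u_1, x \cdot u_2\} + \{x \cdot u_1, u_2\} = \{\lambda\inv u_1, 0\} + \{u_2,u_2\} = 0$. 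Hence it suffices to prove that $x$ acts injectively on $V$.

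Next I would describe $V$ and the action of $x$ on it explicitly. Because $g$ scales a monomial $u_1^{a_1}\cdots u_m^{a_m}$ by $\lambda^{-a_1}$ and $\lambda$ is a primitive $n$th root of unity, $V$ is spanned by those monomials with $a_1 \equiv 1 \pmod n$. The element $x$ is a $(g,1)$-skew derivation, $x \cdot (ab) = (g \cdot a)(x \cdot b) + (x \cdot a)b$, with $x \cdot u_1 = u_2$ and $x \cdot u_i = 0$ for $i \geq 2$. An induction (first showing $x$ annihilates every monomial in $u_2,\ldots,u_m$, then inducting on $a_1$) gives $x \cdot (u_1^{a_1} w) = [a_1]_{\lambda\inv}\, u_1^{a_1-1} u_2 w$ for every monomial $w$ in $u_2,\ldots,u_m$, where $[a_1]_{\lambda\inv} = 1 + \lambda\inv + \cdots + \lambda^{-(a_1-1)}$. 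For $a_1 \equiv 1 \pmod n$ with $n>1$ one has $a_1 \geq 1$ and $n \nmid a_1$, hence $[a_1]_{\lambda\inv} \neq 0$.

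From this, injectivity of $x|_V$ follows: $x$ sends each spanning monomial $u_1^{a_1}\cdots u_m^{a_m}$ of $V$ to a nonzero scalar multiple of $u_1^{a_1-1} u_2^{a_2+1} u_3^{a_3}\cdots u_m^{a_m}$, and the original monomial is recovered from its image by raising the $u_1$-exponent by one and lowering the $u_2$-exponent by one, so distinct spanning monomials have distinct images. Therefore $x|_V$ is injective, whence $\{u_1,u_2\} \in V \cap \ker x = 0$, i.e.\ $\{u_1,u_2\}=0$. When $m=2$, the Poisson bracket on $\kk[u_1,u_2]$ is determined by $\{u_1,u_2\}$ via bilinearity, antisymmetry, and the Leibniz rule, so it is identically zero.

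The main point requiring care — more than a genuine obstacle — is that no quadratic or bounded-degree hypothesis is placed on $P$, so $\{u_1,u_2\}$ may a priori be a polynomial of arbitrarily high degree; one cannot localize to the degree-two part of $P$ and instead must use that $x$ is injective on the \emph{entire} eigenspace $V$. One also has to track the skew-derivation Leibniz rule accurately so that the $q$-integer appearing has base $\lambda\inv$ (not $\lambda$), which is exactly what guarantees it is nonzero for exponents $\equiv 1 \pmod n$.
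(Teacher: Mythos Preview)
Your argument is correct, but it is considerably more elaborate than what the paper does. The paper's proof is a one-line trick: rather than applying $x$ to $\{u_1,u_2\}$ and then working to show that $x$ is injective on the $\lambda^{-1}$-eigenspace of $g$, the paper applies $x$ to the \emph{trivial} bracket $\{u_1,u_1\}=0$. This immediately yields
\[
0 = x \cdot \{u_1,u_1\} = \{g\cdot u_1,\, x\cdot u_1\} + \{x\cdot u_1,\, u_1\} = \lambda^{-1}\{u_1,u_2\} + \{u_2,u_1\} = (\lambda^{-1}-1)\{u_1,u_2\},
\]
and since $\lambda\neq 1$ the conclusion follows. No eigenspace analysis, no $q$-integer computation, and no injectivity argument is needed.

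Your route does have a side benefit: the explicit formula $x\cdot(u_1^{a_1}w)=[a_1]_{\lambda^{-1}}\,u_1^{a_1-1}u_2w$ and the resulting injectivity of $x$ on the $\lambda^{-1}$-eigenspace are genuine structural facts about the Taft action, and they recover the invariant ring description $P^T=\kk[u_1^n,u_2,\ldots,u_m]$ that the paper quotes just before the lemma. But for the lemma itself the paper's trick is both shorter and avoids the bookkeeping you flag at the end (tracking the correct base of the $q$-integer, handling arbitrary degree in $\{u_1,u_2\}$).
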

\begin{proof}
Since $\{u_1,u_1\}=0$, then
\[ 0 = x \cdot \{u_1,u_1\} = \{g\cdot u_1,x \cdot u_1 \} + \{x \cdot u_1,u_1\} = \{ \lambda\inv u_1,u_2\} + \{u_2,u_1\} = (\lambda\inv-1)\{u_1,u_2\}.\]
By definition of the Taft algebra, $\lambda \neq 1$. 
The result follows.
\end{proof}

We now use the above result to classify actions on Poisson polynomial algebras.

\begin{lemma}\label{lem.taft2}
Let $P=\kk[u_1,\hdots,u_m]$, $m>2$, be a Poisson algebra. Suppose $T=T_n(\lambda)$ acts linearly on $P$ and the action is given as in \eqref{eq.taft}. Write $\{u_i,u_j\} = f_{ij}$ for $i>j>1$ and $\{u_i,u_1\}=h_i$ for $i \geq 3$. Then $x(f_{ij})=0$ for all $i,j>1$ and for all $i>2$, $g(h_i)=\lambda\inv h_i$ and $x(h_i)=f_{i2}$.
\end{lemma}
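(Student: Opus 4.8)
The plan is to substitute the pairs $(u_i,u_j)$ and $(u_i,u_1)$ into the defining identity of an $H$-module Poisson algebra, $h\cdot\{a,b\}=\{h_1\cdot a,h_2\cdot b\}$, using the explicit coalgebra structure of $T=T_n(\lambda)$. Since $g$ is grouplike we have $g\cdot\{a,b\}=\{g\cdot a,g\cdot b\}$, and since $x$ is $(1,g)$-primitive we have $x\cdot\{a,b\}=\{g\cdot a,x\cdot b\}+\{x\cdot a,b\}$. All the information about the action that we need is recorded in \eqref{eq.taft}.

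First I would treat $x(f_{ij})$. Fix $i>j>1$. Then $g\cdot u_i=u_i$ and $x\cdot u_i=x\cdot u_j=0$, so
\[ x(f_{ij})=x\cdot\{u_i,u_j\}=\{g\cdot u_i,x\cdot u_j\}+\{x\cdot u_i,u_j\}=\{u_i,0\}+\{0,u_j\}=0.\]
The statement for arbitrary $i,j>1$ then follows from antisymmetry of the bracket (and $\{u_i,u_i\}=0$).

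Next, for $i\geq 3$ I would apply $g$ to $h_i=\{u_i,u_1\}$:
\[ g(h_i)=g\cdot\{u_i,u_1\}=\{g\cdot u_i,g\cdot u_1\}=\{u_i,\lambda\inv u_1\}=\lambda\inv h_i,\]
and then apply $x$:
\[ x(h_i)=x\cdot\{u_i,u_1\}=\{g\cdot u_i,x\cdot u_1\}+\{x\cdot u_i,u_1\}=\{u_i,u_2\}+\{0,u_1\}=f_{i2},\]
which is meaningful precisely because $i\geq 3>2>1$ places $f_{i2}$ among the $f_{ij}$ with $i>j>1$.

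There is essentially no obstacle here: each assertion is a single substitution into the module-algebra identity, and the only point requiring care is the index bookkeeping in the last display (that $f_{i2}$ is one of the already-named brackets). Note, in particular, that this computation uses nothing about the action beyond its form \eqref{eq.taft}, and does not rely on Lemma \ref{lem.taft1}.
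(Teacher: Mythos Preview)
Your proof is correct and follows essentially the same approach as the paper: both argue by direct substitution into the $H$-module Poisson algebra identity using the grouplike and $(1,g)$-primitive structure of $g$ and $x$, with the explicit action \eqref{eq.taft}. Your write-up is marginally more detailed (spelling out the vanishing terms in $x(f_{ij})$ and the antisymmetry remark), but the computations are identical in substance.
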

\begin{proof}
The results are immediate from the following calculations:
\begin{align*}
    x(f_{ij}) &= x(\{u_i,u_j\}) = 0, \\
    g(h_i) &= g(\{u_i,u_1\}) = \{g(u_i),g(u_1)\} = \{u_i,\lambda\inv u_1\} = \lambda\inv h_i, \\
    x(h_i) &= x(\{u_i,u_1\}) = \{g(u_i),x(u_1)\} + \{x(u_i),u_1\} = \{u_i,u_2\} = f_{i2}. \qedhere
\end{align*}
\end{proof}

\begin{theorem}\label{thm.taft}
Let $P=\kk[u_1,u_2,u_3]$ be a Poisson algebra. Suppose $T=T_n(\lambda)$ acts linearly on $P$ and the action is given as in \eqref{eq.taft}.
\begin{enumerate}
    \item If $\{u_2,u_3\} \in \kk$, then the bracket on $P$ is trivial.
    \item If $\{u_2,u_3\} \in P_1$, then either the bracket on $P$ is trivial or, up to rescaling, the bracket on $P$ is given by
\begin{align}\label{eq.taft_linear}
    \{ u_1,u_2\} = 0, \qquad 
    \{u_2,u_3\} = u_2, \qquad 
    \{u_1,u_3\} = u_1.
\end{align}
    \item If $\{u_2,u_3\} \in P_2$, then up to a linear change of variable, the bracket on $P$ is given by
\begin{align}\label{eq.quad_taft}
\{ u_1,u_2\} = 0, \qquad
\{ u_2,u_3\} = cu_2u_3, \qquad
\{ u_1,u_3\} = cu_1u_3,
\end{align}
where $c \in \kk$.
\end{enumerate}
\end{theorem}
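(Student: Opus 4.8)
The plan is to use Lemmas~\ref{lem.taft1} and~\ref{lem.taft2} (with $m=3$) to pin down the possible brackets, treating the three cases by the degree of $\{u_2,u_3\}$. Throughout, write $\{u_1,u_2\}=0$ (automatic from Lemma~\ref{lem.taft1}), $\{u_2,u_3\}=f_{32}$, and $\{u_1,u_3\}=h_3$; Lemma~\ref{lem.taft2} tells us $x(f_{32})=0$, $g(h_3)=\lambda\inv h_3$, and $x(h_3)=f_{32}$. Since the action preserves the grading, $f_{32}$ and $h_3$ lie in the same graded component $P_k$ ($k=0,1,2$), so it suffices to understand the linear operators $g$ and $x$ on each $P_k$ and solve these three constraints.

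For part~(1), $k=0$: then $f_{32}\in\kk$, and $h_3\in\kk$ would force $g(h_3)=h_3=\lambda\inv h_3$, hence $h_3=0$; but $x(h_3)=f_{32}$ then gives $f_{32}=0$, so the bracket is trivial. For part~(2), $k=1$: one writes $h_3=\alpha_1u_1+\alpha_2u_2+\alpha_3u_3$ and computes $g(h_3)$ using~\eqref{eq.taft}; the eigenvalue equation $g(h_3)=\lambda\inv h_3$ forces $\alpha_2=\alpha_3=0$ (since $\lambda$ is a primitive $n$th root of unity with $n>1$, the only $\lambda\inv$-eigenvectors among $u_1,u_2,u_3$ are multiples of $u_1$), so $h_3=\alpha_1u_1$. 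Then $x(h_3)=\alpha_1x(u_1)=\alpha_1u_2=f_{32}$, so $f_{32}=\alpha_1u_2$. If $\alpha_1=0$ the bracket is trivial; otherwise rescale $u_3$ (or absorb $\alpha_1$) to get exactly~\eqref{eq.taft_linear}. One should also check the Jacobi identity is automatically satisfied for these brackets (it is, by a direct check, since this is the Poisson bracket attached to a solvable Lie algebra structure on $\mathrm{span}(u_1,u_2,u_3)$). For part~(3), $k=2$: expand $h_3$ in the monomial basis $\{u_i u_j\}$ of $P_2$, impose $g(h_3)=\lambda\inv h_3$ to kill all monomials except those of $g$-weight $\lambda\inv$, namely $u_1u_2$ and $u_1u_3$; write $h_3=\beta\, u_1u_2+\gamma\, u_1u_3$. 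Then $x(h_3)=\beta\,x(u_1u_2)+\gamma\,x(u_1u_3)=\beta u_2^2+\gamma u_2u_3=f_{32}$ (using $x(u_1)=u_2$, $x(u_2)=x(u_3)=0$ and the derivation property). Now impose $x(f_{32})=0$: since $f_{32}=\beta u_2^2+\gamma u_2u_3$ and $x$ annihilates $u_2,u_3$, this is automatic — so instead the real constraint is the Jacobi identity together with requiring the bracket to be well-defined and nontrivial. Substituting $f_{32}=\beta u_2^2+\gamma u_2 u_3$, $h_3=\beta u_1u_2+\gamma u_1 u_3$, $\{u_1,u_2\}=0$ into $\{u_1,\{u_2,u_3\}\}+\{u_2,\{u_3,u_1\}\}+\{u_3,\{u_1,u_2\}\}=0$ and computing yields constraints forcing $\beta=0$; one is then left with $\{u_2,u_3\}=\gamma u_2u_3$, $\{u_1,u_3\}=\gamma u_1 u_3$, which is~\eqref{eq.quad_taft} after relabeling $\gamma=c$, and a linear change of variable handles any remaining normalization.

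The main obstacle I expect is part~(3): one must carefully carry out the Jacobi-identity computation in $P_2$ (degree-$3$ output) to see that the coefficient $\beta$ of $u_1u_2$ in $h_3$ must vanish, and more importantly confirm there are no further sporadic solutions hiding among the $g$-eigenspace — in particular one should double-check that $u_2^2$, $u_2u_3$, $u_3^2$, $u_1^2$ have $g$-weight different from $\lambda\inv$, which uses $n\ge 2$ and hence $\lambda^{-1}\notin\{1,\lambda,\lambda^{-2},\lambda^{2}\}$ except in degenerate cases that must be ruled out or absorbed into the statement "up to a linear change of variable." The remaining steps (parts (1) and (2), and verifying Jacobi for the surviving brackets) are routine linear algebra and short direct calculations.
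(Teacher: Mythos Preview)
Your overall strategy matches the paper's: reduce via Lemmas~\ref{lem.taft1}--\ref{lem.taft2} to the constraints $x(f_{32})=0$, $g(h_3)=\lambda^{-1}h_3$, $x(h_3)=f_{32}$ and solve degree by degree. Parts~(1) and~(2) go through as you describe. The genuine gap is in part~(3): your claim that the Jacobi identity forces $\beta=0$ is false. With $\{u_1,u_2\}=0$, $\{u_2,u_3\}=\beta u_2^2+\gamma u_2u_3$, $\{u_1,u_3\}=\beta u_1u_2+\gamma u_1u_3$, the cyclic sum is
\[
\{u_1,\{u_2,u_3\}\}+\{u_2,\{u_3,u_1\}\}+\{u_3,\{u_1,u_2\}\}
=\gamma u_2(\beta u_1u_2+\gamma u_1u_3)-\gamma u_1(\beta u_2^2+\gamma u_2u_3)=0
\]
identically in $\beta,\gamma$; so Jacobi imposes no relation at all, and you are left with a genuine two-parameter family of Poisson brackets compatible with the Taft action. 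What actually eliminates $\beta$ is the clause ``up to a linear change of variable'' in the statement: the paper replaces $u_3$ by $u_3'=\beta u_2+\gamma u_3$ (up to sign), checks this is $T$-equivariant (both $g$ and $x$ fix or kill $u_2,u_3$), and computes directly that $\{u_2,u_3'\}=\gamma u_2u_3'$ and $\{u_1,u_3'\}=\gamma u_1u_3'$, which is~\eqref{eq.quad_taft} with $c=\gamma$. This substitution, not Jacobi, is the missing mechanism in your argument.

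A smaller side gap: your assertion that $h_3\in P_k$ whenever $f_{32}\in P_k$ does not follow from ``the action preserves the grading''---the hypothesis constrains only the degree of $\{u_2,u_3\}$, not of $\{u_1,u_3\}$. It is nonetheless true, but requires a short argument: since $x$ preserves degree and $x(h_3)=f_{32}\in P_k$, every homogeneous component of $h_3$ outside $P_k$ lies in $\ker x=\kk[u_1^n,u_2,u_3]$, on which $g$ acts trivially; combined with $g(h_3)=\lambda^{-1}h_3$ this forces those components to vanish.
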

\begin{proof}
(1) Write $\{u_3,u_2\}=f \in \kk$. If $f \neq 0$, then $f \notin \mathrm{im} x$. Now write $\{u_1,u_3\} = h$. Then $x(h) = 0$ so $h \in \ker x = \kk[u_1^n,u_2,u_3]$. But then $g(h)=h$, so $h=0$.

(2) Write $\{u_3,u_2\}=f$ and $\{u_3,u_1\}=h$ where $f=au_1+bu_2+cu_3$ for $a,b,c \in \kk$. Since $x(f)=0$, then $a=0$. Moreover, since $f \in \mathrm{im} x$, then $c=0$. Since $f=x(h)$, then $h=bu_1$. Note that $g(h)=\lambda\inv h$. Rescaling now gives the result.

(3) Suppose $\lambda \neq -1$. Since $x(f)=0$, then $f=bu_2^2 + cu_2u_3 + du_3^2$. As $f=x(h)$, then it follows that $f=bu_2^2 + cu_2u_3$ and $h=bu_1u_2 + cu_1u_3$. It is clear that $g(h)=\lambda\inv h$.

If $\lambda=-1$, then $f=au_1^2+bu_2^2 + cu_2u_3 + du_3^2$. However, since $u_1^2 \notin \mathrm{im} x$, then we may assume $a=0$ and reduce to the above computation.

It is left only to check the Jacobi identity in order to verify that this is a valid Poisson structure. We do this directly:
\begin{align*}
\{ u_1, \{u_2,u_3\} \} &+ \{ u_2, \{u_3,u_1\} \} + \{ u_3, \{u_1,u_2\} \} \\
    &= -\{ u_1, bu_2^2 + cu_2u_3\} + \{ u_2, bu_1u_2 + cu_1u_3\} \\
    &= -c u_2 \{ u_1,u_3\} + cu_1 \{u_2,u_3\} \\
    &= c \left( u_2 (bu_1u_2 + cu_1u_3) - cu_1 (bu_2^2 + cu_2u_3) \right) = 0.
\end{align*}

Finally, we make the substitution $u_3' = -(bu_2+cu_3)$ 
and this does not affect the Taft action.
That is, $g(u_3')=u_3'$ and $x(u_3')=0$. Then the bracket 
on $\kk[u_1,u_2,u_3]$ is given by $\{u_1,u_2\}=0$ and
\begin{align*}
\{u_2,u_3'\}
    &= \{ bu_2+cu_3, u_2\} 
    = \{ cu_3,u_2\} = c(bu_2^2 + cu_2u_3)
    = cu_2 (bu_2 + cu_3) = cu_2u_3', \\
\{u_1,u_3'\}
    &= \{ bu_2+cu_3, u_1\} 
    = \{ cu_3,u_1\} = c(bu_1u_2+cu_1u_3)
    = cu_1(bu_2 + cu_3) = cu_1u_3'.
\end{align*}
That is, $P$ has skew-symmetric structure as in Example \ref{ex.skewsym}.
\end{proof}

In subsequent parts of this section, we will examine the Poisson algebras defined in Theorem \ref{thm.taft} and extensions of these actions. First, however, we show that the Taft action on these algebras does not extend to the universal enveloping algebra.

\begin{proposition}\label{prop.taft_ext}
Let $P=\kk[u_1,\hdots,u_m]$ be a Poisson algebra. Suppose $T=T_n(\lambda)$ acts linearly on $P$ and the action is given as in \eqref{eq.taft}. Then the action of $T$ does not extend to an action on $U(P)$ according to \eqref{eq.hopf_ext}.
\end{proposition}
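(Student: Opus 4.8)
The plan is to invoke Theorem~\ref{thm.enveloping}: the action of $T = T_n(\lambda)$ extends to $U(P)$ via \eqref{eq.hopf_ext} if and only if conditions \eqref{eq.hopf_ext_cond1} and \eqref{eq.hopf_ext_cond2} hold for all $h \in T$ and all $a,b \in P$. So it suffices to exhibit a single choice of $h$, $a$, $b$ for which one of these identities fails inside $U(P)$. The natural candidate is $h = x$, since $x$ is the only non-grouplike generator and $\Delta(x) = g \tensor x + x \tensor 1$ is genuinely non-cocommutative; for grouplike $g$ both conditions are automatic (as in Corollary~\ref{cor.cocomm}). The cleanest test elements are $a = b = u_1$, because $x(u_1) = u_2$, $g(u_1) = \lambda\inv u_1$, and $\{u_1,u_1\} = 0$, so the relevant brackets reduce to very few terms.

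First I would write out condition \eqref{eq.hopf_ext_cond1} with $h = x$ and $a = b = u_1$. Using $\Delta(x) = g \tensor x + x \tensor 1$, the left-hand side $\mu_{h_1 \cdot b}\,\nu_{h_2 \cdot a}$ expands to $\mu_{g \cdot u_1}\,\nu_{x \cdot u_1} + \mu_{x \cdot u_1}\,\nu_{1 \cdot u_1} = \lambda\inv \mu_{u_1}\nu_{u_2} + \mu_{u_2}\nu_{u_1}$, while the right-hand side $\mu_{h_2 \cdot b}\,\nu_{h_1 \cdot a}$ expands to $\mu_{x \cdot u_1}\,\nu_{g \cdot u_1} + \mu_{u_1}\,\nu_{x \cdot u_1} = \mu_{u_2}\nu_{u_1} + \mu_{u_1}\nu_{u_2}$ (here I use that $1 \cdot u_1 = u_1$ and that the second Sweedler component of $x$ paired with the $x \tensor 1$ summand contributes $\epsilon$-trivially). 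Subtracting, the condition becomes $(\lambda\inv - 1)\,\mu_{u_1}\nu_{u_2} = 0$ in $U(P)$. Since $\lambda \neq 1$ (Taft algebras require a primitive $n$th root of unity with $n > 1$), this forces $\mu_{u_1}\nu_{u_2} = 0$.

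The final — and really the only substantive — step is to show $\mu_{u_1}\nu_{u_2} \neq 0$ in $U(P)$. By Lemma~\ref{lem.taft1} we have $\{u_1,u_2\} = 0$, so $u_1$ and $u_2$ generate (at least) a copy of the trivial-bracket Poisson subalgebra $\kk[u_1,u_2]$; since $U(-)$ sends Poisson subalgebras to subalgebras compatibly with the PBW-type basis, it is enough to check nonvanishing in $U(\kk[u_1,u_2])$ or, even more crudely, to exhibit a $U(P)$-module on which $\mu_{u_1}\nu_{u_2}$ acts nontrivially. The standard such module is $P$ itself with $\mu_a$ acting by multiplication and $\nu_a$ acting by the Hamiltonian derivation $\{a,-\}$; but $\nu_{u_2}$ acts as $\{u_2,-\}$, which may be zero on generators in the trivial-bracket case, so instead I would use the faithful representation of $U(P)$ on the Kähler-differential-type module $P \oplus \Omega$, or simply appeal to the known PBW basis for $U(P)$ (Oh, \cite{OH3,UU}): in that basis $\mu_{u_1}\nu_{u_2}$ is a nonzero monomial. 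This is the step requiring the most care, since one must cite or reconstruct the precise freeness statement for $U$ of a polynomial Poisson algebra; everything else is a short Sweedler-notation computation. Once $\mu_{u_1}\nu_{u_2} \neq 0$ is established, condition \eqref{eq.hopf_ext_cond1} fails, so by Theorem~\ref{thm.enveloping} the action does not extend, completing the proof.
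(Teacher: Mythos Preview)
Your approach is exactly that of the paper: invoke Theorem~\ref{thm.enveloping}, note that the grouplike element automatically satisfies the conditions, and test \eqref{eq.hopf_ext_cond1} with $h=x$ and $a=b=u_1$. There is, however, a computational slip in your expansion of the right-hand side: $\nu_{g\cdot u_1}=\nu_{\lambda^{-1}u_1}=\lambda^{-1}\nu_{u_1}$, so $\mu_{h_2\cdot u_1}\nu_{h_1\cdot u_1}=\lambda^{-1}\mu_{u_2}\nu_{u_1}+\mu_{u_1}\nu_{u_2}$, not $\mu_{u_2}\nu_{u_1}+\mu_{u_1}\nu_{u_2}$. The true obstruction is therefore
\[
(\lambda^{-1}-1)\bigl(\mu_{u_1}\nu_{u_2}-\mu_{u_2}\nu_{u_1}\bigr),
\]
which is precisely what the paper obtains. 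This does not damage your argument: the PBW basis you cite from \cite{OH3,UU} shows that $\mu_{u_1}\nu_{u_2}$ and $\mu_{u_2}\nu_{u_1}$ are distinct basis monomials (note $\{u_1,u_2\}=0$ gives $[\nu_{u_i},\mu_{u_j}]=0$ but says nothing about the commutativity of the $\mu$'s with each other's $\nu$'s in the ordered sense), so their difference is nonzero and the conclusion stands. Just correct the term and adjust the final nonvanishing claim accordingly.
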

\begin{proof}
By Theorem \ref{thm.enveloping}, it suffices to check \eqref{eq.hopf_ext_cond1} and \eqref{eq.hopf_ext_cond2} for the action of $g$ and $x$ on the generators. However, since $g$ is grouplike, these conditions are satisfied trivially. 

We show that the action of $x$ fails \eqref{eq.hopf_ext_cond1}. In the notation of Theorem \ref{thm.enveloping}, let $a=b=u_1$, then \eqref{eq.hopf_ext_cond1} says
\begin{align*}
(\mu_{g \cdot u_1}\nu_{x \cdot u_1} + \mu_{x \cdot u_1}\nu_{u_1})
    - (\mu_{x \cdot u_1}\nu_{g \cdot u_1} + \mu_{u_1}\nu_{x \cdot u_1})
    &= (\lambda\inv \mu_{u_1}\nu_{u_2} + \mu_{u_2}\nu_{u_1})
    - (\lambda\inv\mu_{u_2}\nu_{u_1} + \mu_{u_1}\nu_{u_2}) \\
    &= (\lambda\inv -1)(\mu_{u_1}\nu_{u_2} - \mu_{u_2}\nu_{u_1}) \neq 0.
\end{align*}
This gives the result.
\end{proof}

\subsection{The linear case}

Here we study the Poisson algebra $P=\kk[u_1,u_2,u_3]$ with bracket \eqref{eq.taft_linear}. We consider the rigidity of the Taft action presented above, as well as its extensions.

Observe first that \eqref{eq.taft_linear} is the Kostant-Krillov bracket corresponding to the 3-dimensional Lie algebra $\fg$ with bracket $[u_1,u_2]=0$, $[u_2,u_3]=u_2$, $[u_1,u_3]=u_1$. It may also be realized as the Poisson Ore extension $\kk[u_1,u_2][u_3,\delta]$ where $\delta(u_1)=u_1$ and $\delta(u_2)=u_2$. That is, $\delta$ is the \emph{Euler derivation} given, in general, by $\delta(f)=|f| f$ for homogeneous $f \in \kk[u_1,u_2]$.

The Rees algebra of $P$ is $R(P)=\kk[u_1,u_2,u_3,t]$ with bracket
\[ 
    \{u_i,t\} = \{ u_1,u_2\} = 0, \qquad 
    \{u_3,u_2\} = u_2t, \qquad 
    \{u_3,u_1\} = u_1t.
\]
By Theorem \ref{thm.rees}, the action of $T$ on $P$ extends to an action on $R(A)$ by setting $g(t)=t$ and $x(t)=0$.

It is easy to see that $P$ is not unimodular. In particular, the modular derivation of $P$ applied to $u_3$ is
\[
\phi(u_3) = \sum_{j=1}^3 \frac{\partial}{\partial u_j} \{u_3,u_j\}
    = \frac{\partial}{\partial u_1} \{u_3,u_1\} + \frac{\partial}{\partial u_2} \{u_3,u_2\} = -2 \neq 0.
\]
Set $v_1=u_1^n$, $v_2 = u_2$, and $v_3 = u_3$. Then $P^T = \kk[v_1,v_2,v_3]$ with bracket 
\[ 
    \{ v_1,v_2 \} = 0, \qquad 
    \{ v_2, v_3 \} = v_2, \qquad 
    \{ v_1,v_3 \} = n v_1.
\]
A similar computation shows that $P^T$ is also not unimodular.

We wish to show that $P \niso P^T$, and we do this by comparing Poisson automorphism groups.

\begin{lemma}\label{lem.taft_linear1}
Let $P=\kk[u_1,u_2,u_3]$ be a Poisson algebra with bracket \eqref{eq.taft_linear}. The Poisson automorphism group of $P$ is $\PAut(P) \iso \GL_2(\kk) \rtimes G$ where $G$ is the set of triangular automorphisms given by
    \[  u_1 \mapsto u_1 \qquad 
    u_2 \mapsto u_2 \qquad 
    u_3 \mapsto u_3 + f, \quad f \in \kk[u_1,u_2]. 
    \]
\end{lemma}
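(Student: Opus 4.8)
The plan is to determine every Poisson automorphism $\psi$ of $P$ by computing the images $\psi(u_1),\psi(u_2),\psi(u_3)$, exploiting the two canonical derivations attached to this bracket: the modular derivation $\phi$, and the Euler-type derivation $\{-,u_3\}$ on $\kk[u_1,u_2]$.

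First I would record that the modular derivation of $P$ (with respect to the standard volume form) is $\phi=-2\,\partial/\partial u_3$: we computed $\phi(u_3)=-2$ above, and $\phi(u_1)=\partial_{u_3}\{u_1,u_3\}=\partial_{u_3}u_1=0$, and similarly $\phi(u_2)=0$. Next I would invoke the standard fact (cf. \cite{cma1}) that a Poisson automorphism $\psi$ of a polynomial Poisson algebra differs from $\psi\phi\psi^{-1}$ by a Hamiltonian vector field built from $\hdet(\psi)$, namely $\pm\tfrac{1}{\hdet(\psi)}\{\hdet(\psi),-\}$, which vanishes because $\hdet(\psi)\in\kk^\times$ for any polynomial automorphism of $\kk[u_1,u_2,u_3]$. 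Hence $\psi$ commutes with $\phi$, and so with $\partial/\partial u_3$, as an operator on $P$. Evaluating $\psi\circ\partial_{u_3}=\partial_{u_3}\circ\psi$ on the generators gives $\partial_{u_3}\psi(u_1)=\psi(0)=0$, $\partial_{u_3}\psi(u_2)=0$, and $\partial_{u_3}\psi(u_3)=\psi(1)=1$. Therefore $\psi(u_1),\psi(u_2)\in\ker\partial_{u_3}=\kk[u_1,u_2]$, and $\psi(u_3)=u_3+g$ for some $g\in\kk[u_1,u_2]$.

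Then I would use the remaining relations. Since $\{u_1,u_2\}=0$ and the bracket is a biderivation, $\kk[u_1,u_2]$ is Poisson-commutative and $\{p,u_3\}=u_1\partial_{u_1}p+u_2\partial_{u_2}p=:E(p)$ for all $p\in\kk[u_1,u_2]$. Consequently $\psi(u_1)=\psi(\{u_1,u_3\})=\{\psi(u_1),\psi(u_3)\}=\{\psi(u_1),u_3\}+\{\psi(u_1),g\}=E(\psi(u_1))$, the last bracket vanishing as $\psi(u_1),g\in\kk[u_1,u_2]$; by Euler's identity $\psi(u_1)$ is homogeneous of degree one, say $\psi(u_1)=au_1+bu_2$, and likewise $\psi(u_2)=cu_1+du_2$ with $A:=\bigl(\begin{smallmatrix}a&b\\c&d\end{smallmatrix}\bigr)\in M_2(\kk)$. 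Conversely, for any $A$ and any $g\in\kk[u_1,u_2]$ the assignment $\psi_{A,g}\colon u_1\mapsto au_1+bu_2$, $u_2\mapsto cu_1+du_2$, $u_3\mapsto u_3+g$ is an algebra endomorphism of $P$ that respects the defining relations $\{u_1,u_2\}=0$, $\{u_2,u_3\}=u_2$, $\{u_1,u_3\}=u_1$ (a one-line verification), and it is bijective precisely when $A\in\GL_2(\kk)$ (one recovers $u_1,u_2$, then $u_3$). Thus $\PAut(P)=\{\psi_{A,g}:A\in\GL_2(\kk),\ g\in\kk[u_1,u_2]\}$.

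Finally I would read off the group structure: the maps $\psi_{A,0}$ form a subgroup isomorphic to $\GL_2(\kk)$, the maps $\psi_{I,g}$ form the subgroup $G$ of the statement, conjugating $\psi_{I,g}$ by $\psi_{A,0}$ yields $\psi_{I,\,g\circ A}\in G$ so $G$ is normal, and $G\cap\GL_2(\kk)=\{\id\}$ while $G\cdot\GL_2(\kk)=\PAut(P)$; this gives the semidirect product decomposition $\PAut(P)\iso\GL_2(\kk)\rtimes G$. The one nonroutine ingredient is the invariance of the modular derivation used in the first step; if one prefers to avoid citing it, one can instead note that the rank-zero locus $V(u_1,u_2)$ of the Poisson bivector $(u_1\partial_{u_1}+u_2\partial_{u_2})\wedge\partial_{u_3}$ is $\psi$-invariant, forcing $\psi(u_1),\psi(u_2)\in(u_1,u_2)$, but promoting this to membership in $\kk[u_1,u_2]$ and then controlling $\psi(u_3)$ takes appreciably more work, so the modular-derivation route is the most economical one.
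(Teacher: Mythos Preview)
Your proof is correct, and it follows a genuinely different route from the paper's. The paper never invokes the modular derivation; instead it argues via \emph{Poisson normal elements}: one checks that any Poisson normal $w\in P$ lies in $\kk[u_1,u_2]$ (by writing $w=\sum f_i u_3^i$ and computing $\{w,u_1\}$), and that such $w$ must be homogeneous (since $\{w,u_3\}$ is the Euler operator on $\kk[u_1,u_2]$). Since $u_1,u_2$ are Poisson normal, $\psi(u_1),\psi(u_2)$ are too, forcing them into $\kk u_1+\kk u_2$. The form of $\psi(u_3)$ is then extracted by expanding $\psi(u_3)=\sum h_i u_3^i$ and comparing $u_3$-degrees in $\psi(u_2)=\{\psi(u_2),\psi(u_3)\}$.

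Your approach trades the Poisson-normal bookkeeping for one structural input---the transformation law of the modular class under Poisson automorphisms together with constancy of the Jacobian---and in return obtains both $\psi(u_1),\psi(u_2)\in\kk[u_1,u_2]$ and $\psi(u_3)=u_3+g$ in a single stroke; the Euler-identity step then pins down degree~$1$ very transparently. The paper's route is more elementary (no external fact about modular derivations is needed) but is terser at the point where degree~$1$ is concluded, and it must separately analyze $\psi(u_3)$. You also carry out the converse verification and the semidirect-product decomposition explicitly, which the paper leaves implicit. Either argument is fine here; yours generalizes more readily to higher-dimensional analogues where the modular vector field is still a constant-coefficient derivation.
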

\begin{proof}
Suppose $w \in P$ is Poisson normal and write
$w = \sum_{i=0}^d f_i u_3^i$ with $f_i \in \kk[u_1,u_2]$, $f_d \neq 0$. Then $w$ induces a Poisson derivation $\alpha$ such that
\[ w\alpha(u_1) = \{w,u_1\} = \sum_{i=0}^d f_i \{u_3^i,u_1\}
    = \sum_{i=0}^d i f_i u_1 u_3^{i-1}.
\]
Comparing degrees of $u_3$ implies that $d=0$. Hence, the set of Poisson normal elements of $P$ is a subset of $\kk[u_1,u_2]$.

Now write $w = w_0 + w_1 + \cdots \in \kk[u_1,u_2]$ where $w_i$ has degree $i$. Then $\{ w, u_3 \} = \sum i w_i$ so $w=w_i$ for some $i$. Hence, if $\psi \in \PAut(P)$, then $\psi(u_1), \psi(u_2) \in \kk u_1 + \kk u_2$. Write $\psi(u_3) = \sum_{i=0}^d h_i u_3^i$ with $h_i \in \kk[u_1,u_2]$. Then 
\begin{align}\label{eq.linear_aut} 
\kk u_1 + \kk u_2 \ni \psi(u_2) = \{ \psi(u_2),\psi(u_3) \}
    = \sum_{i=0}^d h_i \{ \psi(u_2),u_3^i \}
    = \sum_{i=0}^d h_i i u_3^{i-1} \psi(u_2).
\end{align}
It follows that $d=1$. That is, $\psi(u_3)=\alpha u_3 + f$ for $\alpha \in \kk^\times$ and $f \in \kk[u_1,u_2]$. A straightforward computation using the bracket shows that $\alpha=1$.
\end{proof}

\begin{proposition}\label{prop.taft_linear1}
Let $P=\kk[u_1,u_2,u_3]$ be a Poisson algebra with bracket \eqref{eq.taft_linear}. Suppose $T=T_n(\lambda)$ acts on $P$ such that the action is given as in \eqref{eq.taft}. Then $P^T \niso P$.
\end{proposition}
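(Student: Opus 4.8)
The plan is to show $P^T \not\cong P$ as Poisson algebras by comparing their Poisson automorphism groups, using Lemma \ref{lem.taft_linear1} for $P$ and establishing the analogous structure for $P^T$. Recall from the discussion preceding the proposition that $P^T = \kk[v_1,v_2,v_3]$ with $v_1 = u_1^n$, $v_2 = u_2$, $v_3 = u_3$, and bracket $\{v_1,v_2\}=0$, $\{v_2,v_3\}=v_2$, $\{v_1,v_3\}=nv_1$. First I would run the same argument as in the proof of Lemma \ref{lem.taft_linear1} on $P^T$: a Poisson normal element $w = \sum f_i v_3^i$ (with $f_i \in \kk[v_1,v_2]$) must have $d = 0$ by comparing degrees of $v_3$ in $\{w,v_1\}$, so normal elements lie in $\kk[v_1,v_2]$; then homogeneity in the $\{-,v_3\}$-grading forces $\psi(v_1),\psi(v_2)$ into $\kk v_1 + \kk v_2$ for any $\psi \in \PAut(P^T)$, and $\psi(v_3) = \alpha v_3 + f$ with $\alpha \in \kk^\times$, $f \in \kk[v_1,v_2]$.

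The key point is that the \emph{linear part} of the automorphism group differs. For $P$, the linear Poisson automorphisms fixing the span $\kk u_1 + \kk u_2$ form a copy of $\GL_2(\kk)$, because $\{u_1,u_2\}=0$ and $\{u_i,u_3\}=u_i$ for $i=1,2$ means \emph{any} invertible linear substitution of $u_1,u_2$ preserves the bracket (with $u_3 \mapsto u_3$). For $P^T$, however, $\{v_1,v_3\}=nv_1$ while $\{v_2,v_3\}=v_2$, and since $n > 1$ these eigenvalues differ; hence a linear map $v_1 \mapsto av_1 + bv_2$, $v_2 \mapsto cv_1 + dv_2$ compatible with the bracket must respect the eigenspace decomposition of $\ad(v_3)$, forcing $b = c = 0$. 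So the linear part is only the torus $(\kk^\times)^2$ (rescaling $v_1$ and $v_2$ independently), together with possibly rescaling $v_3$ — but the same computation as at the end of Lemma \ref{lem.taft_linear1}, using $\{v_2,v_3\}=v_2$, forces the $v_3$-coefficient to be $1$. Thus $\PAut(P^T) \cong (\kk^\times)^2 \rtimes G'$ where $G'$ is the analogous group of triangular automorphisms $v_3 \mapsto v_3 + f$, $f \in \kk[v_1,v_2]$.

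To conclude, I would extract an invariant distinguishing $\GL_2(\kk) \rtimes G$ from $(\kk^\times)^2 \rtimes G'$. The cleanest route: the maximal reductive quotient (or a maximal reductive subgroup) of $\PAut(P)$ contains $\GL_2(\kk)$, which is nonabelian, whereas every reductive subgroup of $(\kk^\times)^2 \rtimes G'$ is abelian — indeed $G'$ is unipotent (a group of triangular automorphisms, hence an iterated extension of additive groups), so any reductive subgroup maps isomorphically to a subgroup of the torus $(\kk^\times)^2$ and is therefore abelian. Since $\GL_2(\kk) \hookrightarrow \PAut(P)$ has no such image in $\PAut(P^T)$, the two automorphism groups are non-isomorphic as (ind-)algebraic groups, and therefore $P \not\cong P^T$ as Poisson algebras. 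The main obstacle is being careful about the structure of these infinite-dimensional automorphism groups — one must argue that $G$ and $G'$ really are unipotent/pro-unipotent and that the semidirect product decompositions are honest, so that "nonabelian reductive part" is a genuine isomorphism invariant; alternatively, one can sidestep this by exhibiting a single nonabelian finite subgroup (e.g. a copy of a symmetric group inside $\GL_2(\kk) \subset \PAut(P)$) and checking directly that $\PAut(P^T)$ has no such subgroup because all its finite subgroups are abelian.
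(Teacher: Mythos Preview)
Your approach matches the paper's: compute $\PAut(P^T)$ and compare with $\PAut(P)\cong\GL_2(\kk)\rtimes G$ from Lemma~\ref{lem.taft_linear1}. But there is a slip in your description of $\PAut(P^T)$. You assert that homogeneity in the $\{-,v_3\}$-eigengrading forces $\psi(v_1),\psi(v_2)\in\kk v_1+\kk v_2$; that is not what this grading says. Since $\{v_1,v_3\}=nv_1$ and $\{v_2,v_3\}=v_2$, the derivation $\{-,v_3\}$ assigns $v_1$ weight $n$ and $v_2$ weight $1$, so the weight-$n$ piece of $\kk[v_1,v_2]$ is $\kk v_1\oplus\kk v_2^{\,n}$, not $\kk v_1\oplus\kk v_2$. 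Concretely, $v_1\mapsto v_1+v_2^{\,n}$, $v_2\mapsto v_2$, $v_3\mapsto v_3$ is a Poisson automorphism of $P^T$ that your description misses. The paper records this correctly: $\psi(v_2)=av_2$ and $\psi(v_1)=bv_1+cv_2^{\,n}$ with $a,b\in\kk^\times$, $c\in\kk$, and then $\psi(v_3)=v_3+f$.

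This does not wreck your conclusion. The extra automorphisms $v_1\mapsto v_1+cv_2^{\,n}$ are unipotent, so after enlarging your $G'$ to include them the reductive quotient of $\PAut(P^T)$ is still the $2$-torus $(\kk^\times)^2$, and your finishing argument---a nonabelian finite subgroup of $\GL_2(\kk)$ sits inside $\PAut(P)$, whereas every finite subgroup of $\PAut(P^T)$ injects into an abelian torus---goes through unchanged. The paper is terser at this point, writing only ``The result follows'' after exhibiting the shape of $\psi$; your explicit invariant (existence of a nonabelian reductive or finite subgroup) is a welcome addition, but you should correct the intermediate description of $\PAut(P^T)$ before invoking it.
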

\begin{proof}
We claim that $\PAut(P) \niso \PAut(P^T)$. In light of Lemma \ref{lem.taft_linear1}, it suffices to compute $\PAut(P^T)$.
A similar computation to that lemma shows that the Poisson normal elements form a subset of $\kk[v_1,v_2]$. Note that, for $P^T$, $u_3$ still induces the Euler derivation when we set $\deg(v_1)=n>1$. Hence, if $\psi \in \PAut(P^T)$, then $\psi(u_2)=a u_2$, $a \in \kk^\times$, and $\psi(u_1)=bu_1+ cu_2^n$ for $b,c \in \kk$ with $b \neq 0$. The computation \eqref{eq.linear_aut} then gives that $\psi(u_3)=u_3 + f$ for $\alpha \in \kk^\times$ and $f \in \kk[u_1,u_2]$. The result follows.
\end{proof}

\subsection{The quadratic case}
Next we investigate the bracket in \eqref{eq.quad_taft}. We denote this family of Poisson algebras by
$P_c = \kk[u_1,u_2,u_3]$.

\begin{proposition}
\begin{enumerate}
    \item The Poisson algebra $P_c$ is unimodular if and only if $c=0$.
    \item The Poisson algebras $P_c$ and $P_{c'}$ are isomorphic if and only if $c'=\pm c$.
    \item Suppose $T=T_n(\lambda)$ acts on $P_c$ such that the action is given as in \eqref{eq.taft}. Then $P_c^T \niso P_c$.
\end{enumerate}
\end{proposition}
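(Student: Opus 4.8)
For part (1), the plan is a direct computation of the modular derivation $\phi$ of $P_c$. With $\{u_1,u_2\}=0$, $\{u_2,u_3\}=cu_2u_3$, $\{u_1,u_3\}=cu_1u_3$, one computes $\phi(u_1)$, $\phi(u_2)$, $\phi(u_3)$ from the formula $\phi(f)=\sum_j \partial_{u_j}\{f,u_j\}$. The only nonzero contributions come from the $u_3$-brackets: $\phi(u_1) = \partial_{u_3}\{u_1,u_3\} = \partial_{u_3}(cu_1u_3) = cu_1$ (modulo the $u_2$-term, which vanishes), and similarly $\phi(u_2) = cu_2$, while $\phi(u_3)=\partial_{u_1}\{u_3,u_1\}+\partial_{u_2}\{u_3,u_2\} = -cu_3 - cu_3 = -2cu_3$. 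Hence $\phi$ vanishes identically iff $c=0$. (I would double-check signs and whether $\phi(u_1)$ really picks up $cu_1$ versus $0$, since this is the crux of the ``only if'' direction; but regardless, $\phi(u_3) = -2cu_3$ alone already forces $c=0$ when $\phi=0$.)

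For part (2), the ``if'' direction: the rescaling $u_i \mapsto u_i$ with a sign flip on the bracket, or more concretely swapping the roles of $u_1$ (or $u_2$) appropriately, realizes $P_c \iso P_{-c}$; explicitly, one can send $u_3 \mapsto -u_3$, which fixes $u_1,u_2$ and negates both $\{u_2,u_3\}$ and $\{u_1,u_3\}$, giving an isomorphism $P_c \to P_{-c}$. For the ``only if'' direction, I would argue via a Poisson invariant. First compute $\PAut(P_c)$ or at least the Poisson normal elements: as in Lemma \ref{lem.taft_linear1}, degree considerations in $u_3$ show normal elements lie in $\kk[u_1,u_2]$, and within $\kk[u_1,u_2]$ the bracket with $u_3$ scales monomials $u_1^a u_2^b$ by $c(a+b)$, so every monomial is normal. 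The key numerical invariant is $c$ up to the finite symmetry group acting on the defining data; the linear change of variable must preserve the plane $\kk u_1 \oplus \kk u_2$ (as the span of the degree-one normal elements) and send $u_3$ to $\alpha u_3 + (\text{lower})$, and tracking how $c$ transforms under such maps shows $\alpha = \pm 1$ and hence $c' = \pm c$. The main obstacle here is making the ``tracking'' rigorous: I would extract a coordinate-free invariant, e.g. the eigenvalue structure of the Poisson derivation $\{u_3,-\}$ on the plane of linear normal elements (eigenvalue $c$ with multiplicity $2$) versus that of $\{u_3',-\}$ for the transformed generator, and note that rescaling $u_3 \mapsto \alpha u_3$ is the only freedom, forcing $|c'| = |c|$ once one also checks $\alpha \in \{\pm 1\}$ from the requirement that the bracket close with the right coefficients.

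For part (3), the strategy mirrors Proposition \ref{prop.taft_linear1}: set $v_1 = u_1^n$, $v_2 = u_2$, $v_3 = u_3$, so by \eqref{eq.Taft_inv} we have $P_c^T = \kk[v_1,v_2,v_3]$, and compute the induced bracket: $\{v_1,v_2\} = \{u_1^n,u_2\} = 0$, $\{v_2,v_3\} = cu_2u_3 = cv_2v_3$, and $\{v_1,v_3\} = \{u_1^n,u_3\} = nu_1^{n-1}\{u_1,u_3\} = ncu_1^n u_3 = ncv_1v_3$. So $P_c^T$ is the skew-symmetric quadratic Poisson algebra with structure constants $0$, $c$, $nc$ on the three pairs, whereas $P_c$ has constants $0$, $c$, $c$. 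If $c \neq 0$, these are non-isomorphic because $n > 1$: an isomorphism would have to match the eigenvalue data of the relevant Poisson derivations, and the ratio of the two nonzero structure constants is $1$ for $P_c$ but $n \neq 1$ for $P_c^T$ — an invariant of the Poisson algebra (it can be read off, e.g., from the normal element data together with the action of the ``Euler-type'' derivation). If $c = 0$, then $P_c$ has trivial bracket but $P_c^T = P_0$ also has trivial bracket, so one must instead distinguish them as plain commutative algebras — but $P_0 = \kk[u_1,u_2,u_3]$ and $P_0^T = \kk[u_1^n,u_2,u_3] \iso \kk[v_1,v_2,v_3]$ are isomorphic as algebras, so I expect the proposition implicitly assumes $c \neq 0$ (the $c=0$ case is the trivial-bracket case excluded from the quadratic family, or the statement should be read with that caveat). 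The main obstacle in part (3) is cleanly packaging the isomorphism-invariant ``ratio of structure constants''; I would do this by noting that the two-dimensional space of linear Poisson-normal elements carries a canonical (up to scalar) linear functional — the ``modular-type weight'' or the eigenvalue of $\{u_3,-\}$ — and that this functional has a single eigenvalue on $P_c$ but two distinct eigenvalues ($c$ and $nc$) on $P_c^T$, which no Poisson isomorphism can reconcile.
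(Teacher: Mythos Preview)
Your treatment of part~(1) matches the paper exactly: compute $\phi$ directly and observe $\phi(u_3)=-2cu_3$, which forces $c=0$.

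Part~(2) contains a genuine error in the ``if'' direction. The map $u_3\mapsto -u_3$ (fixing $u_1,u_2$) is \emph{not} a Poisson isomorphism $P_c\to P_{-c}$. Writing $u_3'=-u_3$, one has
\[
\{u_1,u_3'\}=-\{u_1,u_3\}=-cu_1u_3=cu_1u_3',
\]
so the structure constant in the new coordinates is still $c$, not $-c$; you negated the value of the bracket but not the coefficient. More generally, any rescaling $u_3\mapsto\alpha u_3$ leaves $c$ unchanged, so your ``only if'' sketch (``tracking how $c$ transforms \ldots\ shows $\alpha=\pm1$'') is also off: $\alpha$ is unconstrained by this consideration and cannot be the source of the sign $\pm c$. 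The paper does not argue either direction by hand; it simply invokes \cite[Theorem~4.6]{GXW}, which classifies skew-symmetric Poisson polynomial algebras up to isomorphism and yields the stated equivalence directly. If you want a self-contained argument you must (a) exhibit an explicit isomorphism $P_c\cong P_{-c}$ different from the one you wrote, and (b) for the converse, isolate a genuine invariant of the skew matrix under \emph{all} Poisson isomorphisms, not just diagonal ones.

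For part~(3) you compute $P_c^T$ exactly as the paper does, obtaining the skew-symmetric structure with constants $(0,nc,c)$. The paper then again appeals to \cite[Theorem~4.6]{GXW} to conclude $P_c^T\not\cong P_c$, whereas you attempt a direct argument via the ``ratio $1$ versus $n$'' of the nonzero constants. The heuristic is correct, but as written it is not a proof: you have not shown this ratio is preserved by an arbitrary (possibly ungraded) Poisson isomorphism, and your appeal to ``eigenvalue data of the relevant Poisson derivations'' requires exactly the classification input you were missing in part~(2). Your observation about $c=0$ is well taken: in that case both $P_0$ and $P_0^T$ are polynomial rings with trivial bracket and hence isomorphic, so the statement must be read with $c\neq 0$.
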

\begin{proof}
(1) This is well known (see, e.g., \cite[Theorem 3.1]{CGWZ4}). It is also easy to compute the modular derivation directly. We see that $\phi(u_1)=c u_1$, $\phi(u_2)=c u_2$, and 
\[
\phi(u_3) = \sum_{j=1}^3 \frac{\partial}{\partial u_j} \{u_3,u_j\}
    = \frac{\partial}{\partial u_1} \{u_3,u_1\} + \frac{\partial}{\partial u_2} \{u_3,u_2\} = -2c u_3.
\]

(2) This follows almost immediately from \cite[Theorem 4.6]{GXW}.

(3) Set $v_1=u_1^n$, $v_2 = u_2$, and $v_3 = u_3$. Then $P_c^T = \kk[v_1,v_2,v_3]$ with bracket 
\begin{align}\label{eq.Ac_rels}
    \{ v_1,v_2 \} = 0, \qquad 
    \{ v_2, v_3 \} = cv_2v_3, \qquad 
    \{ v_1,v_3 \} = n c v_1v_3.
\end{align}
This is again a quadratic Poisson algebra with skew-symmetric structure, and so again the result follows from \cite[Theorem 4.6]{GXW}.
\end{proof}

Taft actions on quantum affine spaces were studied in \cite{CG} under the assumption that the parameters are all roots of unity of order $\geq 3$. This is to ensure that the grouplike element $g$ acts diagonally. Using quantization, we obtain a Taft action on a non-PI quantum 3-space.

\begin{example}\label{ex.quant1}
Let $P_c$ be the Poisson algebra with bracket as in \eqref{eq.quad_taft}. Here we have $c_{13}^{13}=c_{23}^{23}=c$ and all others zero. Setting $\hbar=1$, the quantization is $\kk\langle y_1,y_2,y_3\rangle$ modulo the following relations:
\[
[y_1,y_2] =
[y_2,y_3] - \frac{c}{2}(y_2y_3 + y_3y_2) = 
[y_1,y_3] - \frac{c}{2}(y_3y_1 + y_1y_3) = 0.
\]
Assume $c\neq 2$. Set $q=\frac{2+c}{2-c}$. 
Then the above relations simplify to
\[
y_1y_2 - y_2y_1 = 0, \qquad
y_2y_3 - qy_3y_2 = 0, \qquad
y_1y_3 - qy_3y_1 = 0.
\]
This is a quantum 3-space. 

The Taft action \eqref{eq.taft} on $P_c$ 
extends to an action on the quantization by setting:
\begin{align*}
    g(y_1)&=\lambda\inv y_1 & g(y_2)&=y_2 & g(y_3)&=y_3 \\
    x(y_1)&= y_2 & x(y_2)&=0 & x(y_3)&=0.
\end{align*}

Even though the Poisson center satisfies $P_c$ is $\kk$ (when $c \neq 0$), this does not violate \cite[Theorem 3.3]{EW3} because the Poisson center of $Q(P_c)$ is nontrivial. For example, $u_1u_2\inv$ is Poisson central in $Q(P_c)$.
\end{example}

\section{Questions}
In this final section, we pose some questions related to the results presented in this paper.

Most of our results regarding Hopf actions on quadratic Poisson algebras require that the Hopf actions preserve the natural gradings or filtrations on the underlying polynomial algebras. For example, without these conditions, additional Taft algebra actions are present, as the next example illustrates.

\begin{example}[{\cite[Example 3.6]{EW3}}]
Consider the Poisson algebra $P=\kk[u_1,u_2,u_3]$ with bracket 
\[ \{u_1,u_2\} = u_1u_2, \quad \{u_1,u_3\} = u_1u_3, \quad \{u_3,u_2\} = u_2u_3.\] 
Define an action of the Sweedler algebra $T=T_2(-1)$ on $P$ by
\begin{align*}
    g \cdot u_1 &= u_1 &
    g \cdot u_2 &= u_2 &
    g \cdot u_3 &= -u_3 \\
    x \cdot u_1 &= 0    &
    x \cdot u_2 &= 0    &
    x \cdot u_3 &= u_1u_2.
\end{align*}
By the cited reference, $T$ acts on the associative algebra $P$. It is clear that $g$ acts as a Poisson automorphism of $P$. There are only two nontrivial checks for the $x$-action:
\begin{align*}
x \cdot (\{u_1,u_3\}-u_1u_3)
    &= \{g(u_1),x(u_3)\} - g(u_1)x(u_3)
    = \{ u_1,u_1u_2\} - u_1(u_1u_2)
    = u_1(u_1u_2) - u_1(u_1u_2) = 0 \\
x \cdot (\{u_3,u_2\}-u_2u_3)
    &= \{x(u_3),x(u_2)\} - g(u_2)x(u_3)
    = \{ u_1u_2,u_2\} - u_2(u_1u_2)
    = (u_1v)u_2 - v(u_1u_2) = 0.
\end{align*}
\end{example}

We wonder if the rigidity theorem still holds if we relax the linearity of the Hopf actions on the underlying polynomial algebras. 

\begin{question}
Suppose a Taft algebra $T_n(\lambda)$ acts on a quadratic Poisson algebras $\kk[x_1,\ldots,x_m]$. Under what conditions do these actions factor through a group algebra? 
\end{question}

In \cite{CEW1,CEW2}, it is proved that on the associative algebra side, if a finite-dimensional Hopf algebra acts on the $n$th Weyl algebra, the action must factor through a group algebra. In Corollary \ref{cor.weyl}, we demonstrated that the Poisson version of the above rigidity theorem holds for Weyl Poisson algebras, provided that the Hopf actions preserve the natural filtration on the Weyl Poisson algebra. This applies to the idea of Rees algebras associated with filtered Poisson algebras. We wonder if the following conjecture holds in its full generality. 

\begin{conjecture}\label{conj.pweyl}
If $H$ is a finite-dimensional Hopf algebra that acts on a Weyl Poisson algebra, then the $H$-action must factor through a group algebra.
\end{conjecture}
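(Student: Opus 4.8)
\emph{A possible approach to Conjecture \ref{conj.pweyl}.} The plan is to remove the ``preserves the standard filtration'' hypothesis from Corollary \ref{cor.weyl}. As always one first reduces to the case that $H$ acts inner faithfully, replacing $H$ by the Hopf quotient through which the action factors otherwise. Note that the route through the universal enveloping algebra (Corollary \ref{cor.weyl1}) is not directly available, since it needs the compatibility conditions \eqref{eq.hopf_ext_cond1}--\eqref{eq.hopf_ext_cond2} of Theorem \ref{thm.enveloping}, which fail for a general non-cocommutative $H$ (cf.\ Proposition \ref{prop.taft_ext}). The most optimistic strategy is instead to manufacture an $H$-stable filtration and reuse the quantization argument of Corollary \ref{cor.weyl}: since $\dim_\kk H<\infty$, the $H$-submodule $V\subseteq\pw$ generated by $\pw_1=\bigoplus_{i=1}^n(\kk u_i\oplus\kk v_i)$ is finite-dimensional and generates $\pw$ as an algebra, so the ascending filtration $\{F_k^H\}$ in which $F_k^H$ is the span of products of at most $k$ elements of $V$ is automatically $H$-stable, exhaustive and multiplicative. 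If $F^H$ were a \emph{Poisson} filtration with $R_{F^H}(\pw)$ a quadratic polynomial Poisson algebra admitting a flat noncommutative quantization of the type handled in Corollary \ref{cor.weyl}, then Theorem \ref{thm.rees} would finish the proof. The obstruction is that $V$ typically contains elements $h\cdot u_i$ of large degree, so $\{V,V\}$ need not lie in $F_2^H$; then $F^H$ is not a Poisson filtration, and coarsening it to an $H$-stable Poisson filtration may produce an associated graded that is not a polynomial ring, or that carries an induced bracket for which no rigidity theorem is known. This degeneration is one reason the statement is only conjectural.

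The second, and to my mind more promising, route imitates the associative rigidity proofs of Cuadra, Etingof, and Walton \cite{CEW1,CEW2}. Since $\pw$ is symplectic, its Poisson fraction field $Q(\pw)$ has trivial Poisson center $\kk$ (an element is Poisson central if and only if all of its partial derivatives vanish). An $H$-action on $\pw$ should extend to $Q(\pw)$ --- this already requires care, as finite-dimensional Hopf actions do not automatically descend to fraction fields --- after which one would invoke a Poisson analogue of the Etingof--Walton rigidity theorem for ``Poisson central simple'' fields, in the spirit of \cite[Theorem 3.3]{EW3} discussed after Example \ref{ex.quant1}, to conclude that an inner-faithful finite-dimensional Hopf action on such a field is a group algebra. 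The real content is establishing that rigidity statement for arbitrary, not necessarily semisimple, finite-dimensional $H$: the associative counterpart is proved by a delicate reduction modulo a prime $p$, exploiting the fact that in characteristic $p$ the Weyl algebra is Azumaya over its (large) center. The Poisson analogue of that input is that in characteristic $p$ the Weyl Poisson algebra acquires the Poisson center $\mathbb{F}_p[u_1^p,v_1^p,\dots,u_n^p,v_n^p]$, over which it is ``Poisson--Azumaya''; one would spread $H$, $\pw$ and the action out over a finitely generated subring of $\kk$, reduce mod $p$, descend the reduced action to this Poisson center, apply the group rigidity of affine space there, and lift the conclusion back to characteristic zero. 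Carrying out this reduction argument in the Poisson setting --- in particular proving the requisite Poisson--Azumaya descent and controlling inner faithfulness under reduction and lifting --- is the step I expect to be the main obstacle.
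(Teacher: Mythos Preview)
The paper does not prove this statement at all: Conjecture \ref{conj.pweyl} is posed in the final ``Questions'' section precisely as an open problem, with only the remark that Corollary \ref{cor.weyl} handles the filtration-preserving case. So there is no ``paper's own proof'' to compare against, and your submission is, appropriately, not a proof either but a sketch of two heuristic strategies together with an honest account of why each one currently fails.

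Your analysis of the obstacles is accurate. For the first route, you correctly observe that the $H$-submodule $V$ generated by the degree-one piece gives an $H$-stable multiplicative filtration, but that there is no reason for $\{V,V\}\subseteq F_2^H$, so the Rees/quantization machinery of Theorems \ref{thm.rees} and \ref{thm.quant} does not apply; this is exactly the gap between Corollary \ref{cor.weyl} and the full conjecture. For the second route, the reduction-mod-$p$ strategy modeled on \cite{CEW1,CEW2} is indeed the natural thing to try, and your identification of the missing ingredients --- extending the action to $Q(\pw)$, a Poisson analogue of Azumaya descent over the $p$th-power Poisson center, and control of inner faithfulness through reduction --- matches the genuine difficulties. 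None of this is wrong, but none of it closes the gap either: both routes remain programs, not proofs, and the statement remains open in the paper as in your write-up.
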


Similarly, our Corollary \ref{cor.Gskew} is a Poisson version of \cite[Theorem 1.9]{EW2016} where the finite Hopf actions on skew-symmetric Poisson algebras are assumed to preserve the gradings of their underlying polynomial algebra. We wonder if it is possible to relax this assumption.

\begin{question}
Let $\bc=(c_{ij})\in M_n(\kk)$ be a skew-symmetric matrix and $\PC$ as in Example \ref{ex.skewsym}. What are the necessary conditions on $\bc$ for any finite Hopf action on $\PC$ to factor through a group action?
\end{question}

Our Proposition \ref{prop.taft_ext} shows that there are generally no nontrivial extensions of a Taft action on a Poisson polynomial algebra to its universal enveloping algebra. On the other hand, Corollary \ref{cor.cocomm} suggests that any cocommutative Hopf action on a Poisson polynomial algebra does extend to its universal enveloping algebra. We propose a question regarding the actions of other noncommutative Hopf algebras. 

\begin{question}
Is there any family of finite-dimensional Hopf algebras that are not cocommutative, whose actions on a Poisson polynomial algebra can be naturally extended to its universal enveloping algebra?
\end{question}

\end{document}